\def\cyr{%
\renewcommand\rmdefault{wncyr}%
\renewcommand\sfdefault{wncyss}%
\renewcommand\encodingdefault{OT2}%
\normalfont
\selectfont}
\DeclareMathAlphabet{\zap}{OT1}{pzc}{m}{it}
\DeclareTextFontCommand{\textcyr}{\cyr}
\newcommand{\bd}{\text{\cyr  b}}
\newcommand{\pip}{\text{\cyr  f}}
\newcommand{\proj}{\text{\cyr  p}}
\newcommand{\incl}{\text{\cyr  i}}
\def\CC{\mathbb C}
\def\dir{\slashed{D}}
\newtheorem{main}{Theorem}
\DeclareMathOperator{\Aut}{Aut}
\newtheorem{thm}{Theorem}
\newtheorem*{them}{Theorem}
\newtheorem{prop}{Proposition}
\newtheorem{lem}[main]{Lemma}
\newtheorem{lemma}{Lemma}
\newtheorem{cor}{Corollary}
\newtheorem{defn}{Definition}
\def\ZZ{{\mathbb Z}}
\def\RR{{\mathbb R}}
\def\CP{{\mathbb C \mathbb P}}
\def\dir{\not \negthickspace D}
\DeclareMathOperator{\kod}{Kod}
\def\barroman#1{\sbox0{#1}\dimen0=\dimexpr\wd0+1pt\relax
  \makebox[\dimen0]{\rlap{\vrule width\dimen0 height 0.06ex depth 0.06ex}%
    \rlap{\vrule width\dimen0 height\dimexpr\ht0+0.03ex\relax 
            depth\dimexpr-\ht0+0.09ex\relax}%
    \kern.5pt#1\kern.5pt}}
\begin{document}

\title{Kodaira Dimension \&  the Yamabe Problem, II}

\author{Michael Albanese and Claude LeBrun\thanks{Supported 
in part by  NSF grant DMS-1906267.}} 
  
\date{June 27, 2021; revised December 7, 2021}

\maketitle

\begin{abstract}  For compact complex surfaces $(M^4, J)$ 
of K\"ahler type, it was previously shown \cite{lky}  that  the sign of the Yamabe invariant $\mathscr{Y}(M)$ 
only depends on  the  Kodaira dimension $\kod (M, J)$. In this paper,  we  prove that this pattern in fact extends to
 {all} compact complex surfaces {\sf except} those of 
class {\sf VII}. In the process, we also reprove  a  result from \cite{alba} that explains why  the exclusion of  
class  {\sf VII} is essential here. 
  \end{abstract}

The {\em Yamabe invariant}  $\mathscr{Y}(M)$ of a smooth compact $n$-manifold $M$ is a diffeomorphism invariant, 
introduced  by \cite{okob} and \cite{sch},  
that largely captures the global behavior of  the scalar curvature $s=s_g$  for all possible Riemannian metrics $g$ on $M$. 
A cartoon version of   the idea would be to  just take   the supremum
of the scalar curvatures of all unit-volume
 constant-scalar-curvature
metrics on $M$.  However, in the precise definition of the invariant, one 
 only considers 
 those constant-scalar-curvature metrics that are
{\sf Yamabe minimizers}; this   does not change the
{\sf sign} of the supremum, but it does in particular 
guarantee that it  is always finite. Thus   the {\sf Yamabe invariant} of $M$ is    defined  to be 
\begin{equation}
\label{yamdef}
\mathscr{Y}(M) := \sup_\gamma \inf_{g\in \gamma} \frac{\int_M s_g~d\mu_g}{[ \int_Md\mu_g]^{\frac{n-2}{n}}} 
\end{equation}
where  $d\mu_g$ denotes the metric volume measure and 
$\gamma$ varies over all conformal classes of Riemannian metrics $g$ on $M$. With this
convention, $\mathscr{Y}(M) > 0$ iff $M$ admits a metric  of scalar curvature $s> 0$,
whereas $\mathscr{Y}(M) \geq  0$ iff  $M$ admits a unit-volume metric  of  scalar curvature $s >  -\epsilon$
for  every $\epsilon > 0$.

When $n=4$ and $M$ is the underlying smooth manifold of a compact complex surface $(M^4,J)$, 
our purpose here is to study the interplay between the 
Yamabe invariant and a  complex-analytic invariant of $(M,J)$ called the {\em Kodaira dimension}. 
For a compact complex $m$-manifold $(M^{2m},J)$, the Kodaira dimension  \cite{bpv,GH,lazar1} is by definition 
$$\kod (M, J) = \limsup_{j \to +\infty}  \frac{\log \dim H^0 (M,\mathcal{O}(K^{\otimes j}))}{\log j}$$
where $K=\Lambda^{m,0}$ is the {\sl canonical line bundle} of $(M,J)$. 
The value of this  invariant  always belongs to $\{ -\infty, 0, 1, \ldots, m\}$, because 
the Kodaira dimension actually coincides with the 
 largest complex dimension of the image of $M\dasharrow \mathbb{P} [ H^0 (M,\mathcal{O}(K^{\otimes j}))^*]$ among all the possible   ``pluricanonical''  maps 
associated with the  line bundles $K^{\otimes j}$, $j \in \ZZ^+$, subject to   the rather  unusual convention of 
setting  $\dim \varnothing :=-\infty$.  

In complex dimensions $m\geq 3$, the Kodaira dimension is not a diffeomorphism invariant \cite{cat,lebyam,rares}, and hence  is essentially unrelated to 
 the Yamabe invariant. By contrast, the situation is completely  different when $m=1$ or $2$. When $m=1$, classical Gauss-Bonnet immediately implies that  \eqref{yamdef} simplifies to become $\mathscr{Y}(M^2) = 4\pi \chi (M^2)$, while 
classical Riemann-Roch  implies that  the  Kodaira dimension is determined by  the sign of the 
 Euler characteristic.  
When $m=2$, by combining 
 results from Seiberg-Witten theory
  with  constructions  of special   sequences of 
  Riemannian metrics, the second author was able to prove   the following  analogous result  \cite{lky}:

\begin{them}[LeBrun] 
Let $M$ be the  smooth 4-manifold   underlying  a
 compact complex  surface $(M^4,J)$ of {K\"ahler type}.    
Then 
\begin{eqnarray*}
\mathscr{Y}(M) > 0 &\Longleftrightarrow& \kod (M,J) = -\infty ,\\
\mathscr{Y}(M) = 0 &\Longleftrightarrow& \kod (M,J) = 0 \mbox{ or } 1 ,\\
  \mathscr{Y}(M)<  0 &\Longleftrightarrow& \kod (M,J) = 2.
  \end{eqnarray*}
\end{them}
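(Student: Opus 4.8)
The plan is to prove the three equivalences by combining the Kodaira–Enriques classification of compact complex surfaces of Kähler type with two complementary analytic inputs: Seiberg–Witten obstructions to positive scalar curvature on one side, and explicit sequences of metrics (or curvature estimates) realizing the appropriate sign on the other. Since for each Kodaira dimension value the three conditions on the left are mutually exclusive and exhaust all possibilities, it suffices to prove one implication in each line and invoke this trichotomy; concretely I would establish the ``$\Longleftarrow$'' directions, i.e., that the Kodaira dimension determines the sign.

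First, for $\kod(M,J)=-\infty$: by classification, $(M,J)$ is then a blowup of a rational or ruled surface. Rational and ruled surfaces carry metrics of positive scalar curvature (for $\CP_2$ the Fubini–Study metric; for ruled surfaces a submersion metric over the base with round fibers), and positivity of $\mathscr{Y}$ is preserved under blowups since connect-summing with $\overline{\CP_2}$ (which admits positive scalar curvature) keeps $\mathscr{Y}>0$ by the Gromov–Lawson–Schoen–Yau surgery/gluing results. Hence $\mathscr{Y}(M)>0$. Second, for $\kod(M,J)=2$: then $(M,J)$ is of general type, so its minimal model has ample canonical class and nonzero Seiberg–Witten invariant; LeBrun's curvature estimates from Seiberg–Witten theory give a strictly negative upper bound for $\int_M s_g^2\, d\mu_g$, hence in particular $\mathscr{Y}(M)<0$, and this survives blowup because the blown-up manifold still has a nontrivial monopole class. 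Third, for $\kod(M,J)=0$ or $1$: here I would argue $\mathscr{Y}(M)\geq 0$ by producing, via the minimal model (a surface with torsion or numerically trivial canonical class, an elliptic surface, or a Kodaira-dimension-one elliptic fibration), sequences of unit-volume metrics with scalar curvature bounded below by $-\epsilon$ — for instance using Calabi–Yau metrics, flat metrics on tori and hyperelliptic/bielliptic surfaces, or the collapsing constructions along elliptic fibers — and then rule out $\mathscr{Y}(M)>0$ by observing that these surfaces have a nonzero pluricanonical section, hence a nontrivial Seiberg–Witten basic class (the canonical $\mathrm{spin}^c$ structure), which obstructs positive scalar curvature; thus $\mathscr{Y}(M)=0$.

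The main obstacle, I expect, is the middle case $\kod=0,1$, and specifically the delicate point that $\mathscr{Y}$ is exactly zero rather than merely $\geq 0$ or merely $\leq 0$. The lower bound $\mathscr{Y}\geq 0$ requires careful metric constructions surface-by-surface (K3 and Enriques surfaces via Ricci-flat Kähler metrics; complex tori and bielliptic surfaces via flat metrics; properly elliptic surfaces via a Gromov–Lawson-type collapse of the elliptic fibers that keeps scalar curvature almost nonnegative), and one must check these constructions are compatible with blowups. The upper bound $\mathscr{Y}\leq 0$ rests on the fact that for these surfaces $b^+>1$ (after possibly passing to a finite cover or handling the $b^+=1$ cases like certain elliptic surfaces separately) and the canonical class is a Seiberg–Witten basic class with $c_1^2 = 0$, so positive scalar curvature is excluded but no definite negative estimate is forced — exactly the borderline that yields $\mathscr{Y}=0$. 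The $b^+=1$ subcases (e.g. Dolgachev-type elliptic surfaces, bielliptic surfaces) need the refined Seiberg–Witten wall-crossing argument from \cite{lky} to still obstruct positive scalar curvature. I would organize the proof so that these analytic inputs are quoted as black boxes and the classification-theoretic bookkeeping — which minimal models arise, and stability under blowup — is carried out explicitly.
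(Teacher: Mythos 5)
This theorem is not actually proved in the present paper—it is quoted as known from \cite{lky}—and your outline follows essentially the same route as that original proof: Kodaira--Enriques classification of K\"ahler-type surfaces, Seiberg--Witten estimates and basic classes (with wall-crossing for the $b_+=1$ cases) to force $\mathscr{Y}\leq 0$ or $\mathscr{Y}<0$, explicit metric constructions (positive scalar curvature on rational/ruled surfaces, Ricci-flat/collapsing sequences on $\kod=0,1$ surfaces) for the reverse bounds, and the mutual exclusivity of the three cases to upgrade the one-directional implications to equivalences. Aside from two harmless slips—the minimal model of a general-type surface has big and nef, not necessarily ample, canonical class, and you presumably mean a strictly positive lower bound for $\int_M s_g^2\, d\mu_g$ valid for every metric (equivalently a negative upper bound for all Yamabe constants), not a ``strictly negative upper bound for $\int_M s_g^2\, d\mu_g$''—your plan matches the argument of \cite{lky} that the paper cites.
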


 Here the K\"ahler-type condition is equivalent \cite{bpv,nick,siu} to requiring 
that $b_1(M)$ be even. In the present paper, we will prove a partial  generalization
of this  result that allows for the possibility that  $b_1(M)$  might be odd:

\begin{main} 
\label{ellipsis}
Let  $M$ be the underlying smooth   $4$-manifold of
any  compact complex surface $(M^4,J)$  of Kodaira dimension $\neq -\infty$. Then 
\begin{eqnarray*}
\mathscr{Y}(M) = 0 &\Longleftrightarrow& \kod (M,J) = 0 \mbox{ or } 1, \\
  \mathscr{Y}(M)<  0 &\Longleftrightarrow& \kod (M,J) = 2.
  \end{eqnarray*}
\end{main}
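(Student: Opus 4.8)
The plan is to treat the two equivalences by combining Seiberg-Witten-theoretic lower bounds on $\mathscr{Y}(M)$ with explicit upper bounds coming from constructions of sequences of metrics, exactly as in the Kähler case, but now using the Kodaira classification of non-Kähler surfaces of non-negative Kodaira dimension. The key structural fact is that a compact complex surface with $b_1$ odd and $\kod \neq -\infty$ is, up to blowing up, either a minimal properly elliptic surface (possibly non-Kähler, i.e. with odd first Betti number) when $\kod = 1$, or a minimal surface with $\kod = 0$, and the minimal models of the latter with odd $b_1$ are precisely the (primary and secondary) Kodaira surfaces. For $\kod = 2$ the surface is of general type, and blowing down brings us to a minimal surface of general type. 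So the proof reduces to checking the Yamabe sign on these model cases and then verifying that the sign is unaffected by blowups and by the passage to minimal models.

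**The $\kod = 2$ case (strictly negative Yamabe invariant).**

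First I would handle $\mathscr{Y}(M) < 0 \Leftrightarrow \kod(M,J) = 2$. For the forward implication of the non-negative statement (equivalently $\kod = 2 \Rightarrow \mathscr{Y} < 0$): a minimal surface of general type has a non-trivial Seiberg-Witten basic class (the canonical class), and LeBrun's curvature estimates from Seiberg-Witten theory give $\mathscr{Y}(M) \leq -4\pi\sqrt{2 c_1^2(M_{\min})} < 0$; this bound is insensitive to whether $b_1$ is even or odd, since the relevant Seiberg-Witten non-vanishing for surfaces of general type (via the canonical $\mathrm{Spin}^c$ structure and the Witten/Friedman-Morgan computation) holds regardless of parity of $b_1$ — this is where I would have to be slightly careful and invoke the appropriate non-vanishing result. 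The sign is preserved under blowup because blowing up only adds exceptional classes and does not destroy the basic class. Conversely, if $\kod \in \{0,1\}$ we must show $\mathscr{Y} \geq 0$; this follows from the metric constructions below, so the two equivalences are really proved together: once we know $\mathscr{Y} = 0$ exactly when $\kod \in \{0,1\}$ and $\mathscr{Y} < 0$ forces $\kod \notin \{0,1,-\infty\}$ hence (by classification, excluding class VII) $\kod = 2$, both lines follow.

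**The $\kod \in \{0,1\}$ case (vanishing Yamabe invariant).**

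For $\mathscr{Y}(M) = 0$ I need a matching pair of inequalities. The lower bound $\mathscr{Y}(M) \geq 0$: I would show these surfaces admit sequences of metrics with scalar curvature bounded below by $-\epsilon$ at unit volume. For minimal properly elliptic surfaces with odd $b_1$, one can push forward the Kähler constructions — or more robustly, use that such a surface is finitely covered by, or related by known fibration structure to, an elliptic surface to which product-type or warped-fiber metrics with almost-non-negative scalar curvature apply (collapsing the elliptic fibers). For Kodaira surfaces (the $\kod = 0$, $b_1$ odd case), these are nilmanifolds or fiber over a torus with torus fibers, hence carry flat or infranil metrics, or at least metrics with $s$ arbitrarily close to $0$ from below, again by collapsing fibers; a primary Kodaira surface even carries a flat... no — rather, it carries metrics of zero scalar curvature is too strong, but $\mathscr{Y} \geq 0$ via collapsing is safe. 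The upper bound $\mathscr{Y}(M) \leq 0$: here I invoke the Seiberg-Witten obstruction to positive scalar curvature — a minimal complex surface with $\kod \geq 0$ has a non-trivial Seiberg-Witten invariant (in the $b_+ = 1$ case using the wall-crossing / chamber structure, in the $b_+ > 1$ case directly), which rules out $s > 0$ on the minimal model; since the blown-up manifold is $M_{\min} \# k\overline{\CP}^2$ and $\mathscr{Y}$ of a connected sum with $\overline{\CP}^2$ is still $\leq 0$ when the invariant persists (LeBrun's work on $\mathscr{Y}$ under blowups), we get $\mathscr{Y}(M) \leq 0$.

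**Main obstacle.**

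I expect the genuine difficulty to lie in the non-Kähler cases — establishing that Seiberg-Witten non-vanishing and the resulting curvature/Yamabe estimates go through when $b_1$ is odd, since $b_+$ may be small (often $b_+ = 1$, sometimes $b_+ = 0$ for some Kodaira surfaces, and even $b_+ = 0$ for certain properly elliptic surfaces), forcing chamber-dependent arguments or entirely different obstructions to $s > 0$; and dually, constructing the almost-scalar-flat metric sequences on non-Kähler elliptic surfaces and Kodaira surfaces without a Kähler structure to lean on. The cleanest route is probably to use that every such surface is finitely covered by a Kähler (indeed elliptic or complex-torus-bundle) surface, transfer metrics and scalar-curvature bounds to the cover and back down via averaging, and separately invoke the general non-existence of positive-scalar-curvature metrics on aspherical or Seiberg-Witten-nontrivial $4$-manifolds — reconciling these two inputs on each model in the classification list is the crux.
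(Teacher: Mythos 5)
Your outline correctly disposes of the easy parts (the $\kod=2$ case, which is automatically algebraic and hence of K\"ahler type, and the lower bound $\mathscr{Y}\geq 0$ by collapsing the elliptic fibers), but it fails exactly at the point where the real content of the theorem lies: ruling out positive scalar curvature on properly elliptic surfaces with $b_1$ odd. Your proposed obstruction there is a non-trivial Seiberg--Witten invariant of the minimal model (directly when $b_+>1$, via chambers when $b_+=1$). This is false: by Biquard's theorem, a minimal properly elliptic surface with $b_1$ odd --- e.g. $S^1\times N$ with $N\to\Sigma$ a non-trivial circle bundle over a hyperbolic curve, which up to finite covers is the general case --- carries \emph{no} Seiberg--Witten basic classes at all, even though $b_+\geq 2$, so no wall-crossing refinement can rescue the argument. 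Your fallback, ``every such surface is finitely covered by a K\"ahler surface, transfer bounds down by averaging,'' is also impossible: if a finite cover of $X$ were K\"ahler, averaging local push-forwards of its K\"ahler metric would produce a K\"ahler metric on $X$ itself, contradicting $b_1(X)$ odd. So neither branch of your plan closes the crucial case, and your closing hedge (``aspherical or Seiberg--Witten-nontrivial'') names a possible ingredient without supplying the reduction that makes it applicable to the blown-up, non-aspherical manifolds $M\approx (N\times S^1)\#k\overline{\CP}_2$.

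What the paper actually does is reduce, by a covering argument using the structure of the elliptic fibration (trivializing the $j$-invariant and the monodromy on a finite cover), to the single statement that $(N\times S^1)\#k\overline{\CP}_2$ admits no metric of positive scalar curvature, with $N$ a non-trivial circle bundle over a genus $\geq 2$ surface; and then proves this lemma by tools that survive the vanishing of the Seiberg--Witten invariants: either the Schoen--Yau stable-minimal-hypersurface method (the minimizing hypersurface dual to the $S^1$-factor admits a non-zero-degree map to the aspherical manifold $N$, which is incompatible with the second-variation inequality), or Kronheimer's taut-foliation argument, which establishes only a scalar-curvature estimate valid for every metric (a ``mock-monopole class'') by solving perturbed equations on longer and longer cyclic covers --- emphatically not the existence of a basic class. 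Supplying one of these two mechanisms, or an equivalent substitute, is the missing core of your proof.
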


One  cornerstone of Kodaira's classification of complex surfaces  \cite{bpv,GH} is the {\em blow-up} operation, 
which replaces a point of a complex surface $Y$ with a $\CP_1$ of normal bundle $\mathcal{O}(-1)$; this then produces 
a new complex surface $M$ that is diffeomorphic to  $Y \# \overline{\CP}_2$, where 
$ \overline{\CP}_2$ denotes the smooth oriented $4$-manifold obtained by equipping  $\CP_2$ with the non-standard orientation. Conversely, any complex surface $M$ containing a $\CP_1$ of normal bundle $\mathcal{O}(-1)$
can be {\em blown down} to produce a new complex surface $Y$ of which $M$ then becomes the  blow-up. 
In principle, this blow-down procedure can  then  be iterated, but the process must terminate after finitely many steps, 
because each blow-down decreases $b_2$ by $1$. When  a complex surface
$X$ cannot be blown down, it is called {\em minimal}, and the upshot is that any
complex surface $M$ can be obtained from a minimal complex surface 
$X$ by blowing up finitely many times. In this situation,  one then says that $X$ is a {\em minimal model} of $M$. 
Blowing up or down always  leaves the Kodaira dimension unchanged. Moreover, 
 the minimal model of a complex surface is actually 
{\em unique} whenever  $\kod \neq -\infty$.

Our proof of Theorem \ref{ellipsis} also yields the 
 the following additional main result, which was previously proved
in \cite{lno,lky} when $b_1(M)$ is even: 

\begin{main} 
\label{parity}
Let  $(M,J)$ be a 
 compact complex  surface with $\kod \neq -\infty$, and let 
 $(X, J^\prime)$ be its minimal model.
 Then 
 $$\mathscr{Y}(M) = \mathscr{Y}(X).$$
\end{main}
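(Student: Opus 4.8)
\medskip
\noindent\textbf{Proof strategy.} The plan is to derive Theorem~\ref{parity} from Theorem~\ref{ellipsis} together with the invariance of the Kodaira dimension under blowing up, which gives $\kod(X,J^\prime)=\kod(M,J)$. If this common value is $0$ or $1$, then applying Theorem~\ref{ellipsis} to $M$ and to $X$ separately already yields $\mathscr{Y}(M)=0=\mathscr{Y}(X)$, so there is nothing further to do. The substance of the statement thus lies entirely in the case $\kod(M,J)=2$, where Theorem~\ref{ellipsis} only records that $\mathscr{Y}(M)<0$ and $\mathscr{Y}(X)<0$; here I would prove the two inequalities $\mathscr{Y}(M)\ge\mathscr{Y}(X)$ and $\mathscr{Y}(M)\le\mathscr{Y}(X)$ separately, by arguments analogous to those behind Theorem~\ref{ellipsis}.

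For $\mathscr{Y}(M)\ge\mathscr{Y}(X)$: since $M$ is obtained from its minimal model by finitely many blow-ups, $M$ is diffeomorphic to $X\#k\overline{\CP}_2$ for some $k\ge0$, and $\mathscr{Y}(\overline{\CP}_2)=\mathscr{Y}(\CP_2)>0$. One then invokes the standard behaviour of the Yamabe invariant under connected sums (Kobayashi; Petean--Yun): in dimension $\ge3$, connect-summing with a manifold that admits positive-scalar-curvature metrics cannot decrease a non-positive Yamabe invariant. Iterating $k$ times gives $\mathscr{Y}(M)\ge\mathscr{Y}(X)$. (Alternatively, a surface of general type is automatically projective, hence of K\"ahler type, so the entire $\kod=2$ case may simply be quoted from \cite{lno,lky}; but the argument above stays within the methods of the present paper.)

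For $\mathscr{Y}(M)\le\mathscr{Y}(X)$ I would use Seiberg--Witten theory. Since both invariants are negative, one has the identity $\mathscr{Y}(N)^2=\inf_g\int_N s_g^2\,d\mu_g$ for $N=M$ and $N=X$, so it suffices to bound $\int_M s_g^2\,d\mu_g$ from below uniformly over all metrics $g$ on $M$. For the minimal surface of general type $X$, the canonical $\mathrm{spin}^c$ structure and its conjugate have nonvanishing Seiberg--Witten invariants, so $\pm K_X$ are monopole classes; the blow-up formula for Seiberg--Witten invariants then makes $K_X\pm(E_1+\cdots+E_k)$ into monopole classes on $M$, where $E_1,\dots,E_k$ are the exceptional classes and $\pi^{\ast}K_X=K_M-E_1-\cdots-E_k$. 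Applying LeBrun's scalar-curvature estimate $\int_M s_g^2\,d\mu_g\ge32\pi^2(\mathfrak{a}^+_g)^2$ to each of these two monopole classes and adding, the self-dual parts of the exceptional classes enter the cross terms with opposite signs and cancel, while their self-intersections are $\ge0$; what remains is $\int_M s_g^2\,d\mu_g\ge32\pi^2(\pi^{\ast}K_X)^2=32\pi^2 c_1^2(X)$ for every $g$, and $c_1^2(X)>0$ because $X$ is of general type. Hence $\mathscr{Y}(M)\le-4\pi\sqrt{2c_1^2(X)}$; and since $\mathscr{Y}(X)=-4\pi\sqrt{2c_1^2(X)}$ by \cite{lky} (equivalently, by running the same estimate on $X$ itself and matching it against a desingularization of the K\"ahler--Einstein orbifold metric on the canonical model of $X$), this is precisely $\mathscr{Y}(M)\le\mathscr{Y}(X)$, and the proof is complete.

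The step I expect to demand the most care is the Seiberg--Witten input when $b_+(M)=1$ --- that is, for surfaces of general type with $p_g=0$ --- where the Seiberg--Witten invariant is chamber-dependent and one must work with the small-perturbation invariant (or the canonical chamber) for the curvature estimate above to remain valid. The only other genuinely non-formal ingredient is the sharp value $\mathscr{Y}(X)=-4\pi\sqrt{2c_1^2(X)}$: if one does not wish to cite it, the matching lower bound requires constructing near-extremal metrics on $X$ by desingularizing the K\"ahler--Einstein metric on the canonical model of $X$.
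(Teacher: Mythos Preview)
Your proposal is correct and matches the paper's approach closely. The paper proves Theorems~\ref{ellipsis} and~\ref{parity} simultaneously (via Proposition~\ref{lynx}), observing at the outset that any $\kod=2$ surface is algebraic and hence of K\"ahler type, so that this case is entirely covered by citing \cite{lno,lky}; the new content is exactly the non-K\"ahler $\kod\in\{0,1\}$ case, where both $M$ and $X$ are shown to have Yamabe invariant zero. Your reduction of $\kod\in\{0,1\}$ to Theorem~\ref{ellipsis} is precisely this, and your parenthetical remark that the $\kod=2$ case ``may simply be quoted from \cite{lno,lky}'' is in fact what the paper does---your detailed Seiberg--Witten reconstruction of that case (monopole classes $\pi^*K_X\pm\sum E_i$, averaging the two curvature estimates, and matching against the known value $\mathscr{Y}(X)=-4\pi\sqrt{2c_1^2(X)}$) is a faithful summary of the arguments in \cite{lno,lky}, but it is not reproved here.
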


Given the results previously proved in \cite{lky},  Theorems \ref{ellipsis} and \ref{parity} just require one to 
show that no properly elliptic complex surface with $b_1$ odd can admit  Riemannian metrics
of positive scalar curvature. In \S \ref{reduction} below, we give a  covering argument that reduces this claim to a lemma
asserting that blow-ups of $T^2$-bundles over high-genus Riemann surfaces cannot admit 
positive-scalar-curvature Riemannian metrics. 
The subsequent  two sections then give two entirely different proofs of  this lemma.
Our first proof, detailed in \S \ref{schoen-yau} and previously sketched in \cite{glblb}, uses  the stable-minimal-hypersurface method of  Schoen and Yau \cite{syrger}. Our second proof, laid out in \S \ref{monopoly} below, 
 instead deduces the lemma from a curvature estimate implicitly proved by  Kronheimer \cite{K},  and is 
 closer in spirit to  \cite{lno,lky} because of the leading role played by 
  the Seiberg-Witten equations.  We then go on, in \S \ref{pathology}, to explain why Theorems \ref{ellipsis}
 and \ref{parity} require  the exclusion of the $\kod = -\infty$ case, while in the process giving a simplied
 proof of the main result of \cite{alba}. We then conclude, in \S \ref{conclusion}, with a discussion  of  pertinent related  results
 and various open problems. 

\pagebreak

\section{Reducing the Problem to a Lemma}
\label{reduction}

Any complex surface of $\kod = 2$ is algebraic \cite{bpv}, and hence of K\"ahler type. Since the K\"ahler-type 
versions of Theorems \ref{ellipsis}
and \ref{parity}
were previously proved in \cite[Theorems A and 2]{lky},  we therefore only need to address the  cases
of  complex surfaces $(M^4,J)$ with $b_1$ odd and  $\kod =0$ or $1$. Any such  complex surface is necessarily 
elliptic \cite{bpv}, in the sense that it must   admit a holomorphic map to a complex curve with generic
fiber diffeomorphic to  $\mathbb{T}^2$. It therefore follows \cite[Corollary 1]{lky} that
any such $(M,J)$ satisfies $\mathscr{Y}(M) \geq 0$, because one can  construct   sequences of Riemannian
metrics on such spaces with $\int s^2d\mu \to 0$. However, any complex surface 
with $b_1$ odd and
$\kod =0$ 
 is finitely covered by some blow-up of a primary Kodaira surface, which is then a  symplectic $4$-manifold with 
$b_+=2$; since a celebrated  result of Taubes \cite{taubes} then implies that this finite cover 
 carries a Seiberg-Witten basic class, and therefore  does not admit metrics of positive scalar curvature, 
one may therefore conclude \cite[p. 153]{lky} 
that    $\mathscr{Y}(M) = 0$. To prove Theorems \ref{ellipsis} and \ref{parity}, it therefore suffices to prove
that if  $\kod (M^4,J) =1$ and $b_1(M)\equiv 1\bmod 2$, then $M$ cannot admit a  Riemannian metric of
positive scalar curvature. We will deduce this from the following narrower statement:

\begin{lem}
\label{crux} 
Let $\Sigma$ denote a compact Riemann surface of genus $\geq 2$,  and let $N\to \Sigma$ be a principal $\mathbf{U}(1)$-bundle of non-zero 
Chern class. Set $X= N\times S^1$, and let $M= X\# k \overline{\CP}_2$ for some integer $k\geq 0$. Then 
$M$ does not admit any Riemannian metric $g$ of positive scalar curvature. 
\end{lem}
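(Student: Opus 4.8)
The plan is to show that $M = X \# k\overline{\CP}_2$, where $X = N \times S^1$ with $N \to \Sigma$ a nontrivial circle bundle over a genus $\geq 2$ surface, admits no positive-scalar-curvature (PSC) metric. I will give the Seiberg-Witten-flavored argument, since the paper announces a Kronheimer-based proof in \S\ref{monopoly}. The starting observation is that $X = N \times S^1$ is a $4$-manifold with $b_1(X)$ odd (indeed $b_1 = 2g+1$ when the Chern class is a nonzero non-torsion-free... more precisely $b_1(X) = b_1(N) + 1$ and $b_1(N) = 2g$ since the Euler class is nontorsion, killing one generator — wait, it kills nothing in $H^1$; rather $H^1(N;\mathbb{Q}) = H^1(\Sigma;\mathbb{Q})$ has dimension $2g$, so $b_1(X) = 2g+1$, odd). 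So $X$ is non-Kähler. The key point is that $N \times S^1$ carries a natural symplectic structure: $N$ is a contact manifold (the connection $1$-form $\alpha$ with $d\alpha = $ pullback of a volume form on $\Sigma$ is contact when the Chern class is nonzero), and then on $N \times S^1$ the form $\omega = d(t\,\alpha) + \pi^*\omega_\Sigma$... actually the cleanest statement: $N \times S^1$ is the total space of a $T^2$-bundle over $\Sigma$ and is symplectic; these are among the ``minimal properly elliptic surfaces'' or Kodaira-type manifolds. I would cite that $N \times S^1$ admits a symplectic form taming a complex structure making it a minimal complex surface of Kähler... no — of Kodaira dimension $1$ with $b_1$ odd (a properly elliptic surface), or note directly it is symplectic.

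Granting that $X$ is symplectic with $b_+(X) \geq 1$, the route is: (1) Compute $b_+(X)$. Since $b_1(X) = 2g+1$ is odd and $X$ is a $T^2$-bundle over $\Sigma$, one finds $b_+(X) = b_-(X)$ and $\chi(X) = 0$, $\sigma(X) = 0$; a quick Wang-sequence or Leray-Hirsch computation gives $b_2(X) = 4g$, hence $b_+(X) = 2g \geq 4 > 1$. (2) Since $b_+(X) > 1$, Taubes' theorem applies: the symplectic form endows $X$ with a Spin$^c$ structure whose Seiberg-Witten invariant is $\pm 1$, so $X$ has a nonzero SW basic class. (3) Blowing up: the blow-up formula for Seiberg-Witten invariants (or Taubes again, since blow-ups of symplectic manifolds are symplectic) shows $M = X \# k\overline{\CP}_2$ also has $b_+(M) = b_+(X) > 1$ and a nonzero SW invariant. (4) A manifold with $b_+ > 1$ and a nonzero SW invariant admits no PSC metric (Witten's vanishing theorem). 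This chain delivers the conclusion.

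Alternatively — and this matches the paper's stated first proof via Schoen-Yau and its reduction through a ``covering argument'' — one can avoid gauge theory: pass to a finite cover to reduce to $N \times S^1$ itself with $k = 0$ (the blow-up points are removed by going to a cover that separates them, or one handles blow-ups directly), then observe that $N \times S^1$ fibers over $\Sigma$ with $T^2$ fibers, and apply the stable minimal hypersurface descent of Schoen-Yau: a PSC metric on $M^4$ would, via a stable minimal hypersurface representing a suitable codimension-one homology class (here coming from $H_3(N \times S^1)$, e.g. the class of $N \times \{pt\}$ or the preimage of a curve), force a PSC metric on a $3$-manifold that is itself an $S^1$- or $T^2$-bundle over a surface of genus $\geq 2$ — and iterating down to a surface of genus $\geq 2$ contradicts Gauss-Bonnet. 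The technical heart here is verifying that the relevant homology class is represented by a two-sided stable minimal hypersurface and controlling how the topology (in particular the ``genus $\geq 2$'' base, which is what obstructs PSC) is inherited by the lower-dimensional manifold; the aspherical/enlargeable structure of $N \times S^1$ is really what's doing the work, so invoking Gromov-Lawson enlargeability (the universal cover admits an area-decreasing map of nonzero degree to $\mathbb{R}^4$, via the map down to the genus $\geq 2$ surface's hyperbolic plane) is the slickest packaging.

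The main obstacle, in the gauge-theoretic approach, is establishing that $X = N \times S^1$ is symplectic (or directly that it carries a SW basic class) — i.e., identifying it concretely as a minimal properly elliptic surface with $b_1$ odd so that Taubes' machinery is available; once that is in hand, steps (3)–(4) are standard. In the minimal-hypersurface approach, the main obstacle is the descent/regularity bookkeeping: ensuring the stable minimal hypersurface exists, is smooth (automatic in these low dimensions), and that the induced conformally-modified metric is PSC on a manifold whose fundamental group still obstructs PSC. For enlargeability, the obstacle is essentially just pinning down the nonzero-degree area-decreasing map, which follows from $\widetilde{\Sigma} = \mathbb{H}^2$ and the bundle being flat-ish at infinity after rescaling fibers. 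I would lead with the Seiberg-Witten argument for brevity, remarking that it is robust under the blow-ups since symplectic manifolds stay symplectic and SW-nontriviality with $b_+>1$ is blow-up stable.
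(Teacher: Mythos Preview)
Your main approach fails at the step you yourself flag as the obstacle: $X = N \times S^1$ is \emph{not} symplectic, and in fact carries no Seiberg--Witten basic classes at all. The paper makes exactly this point in \S\ref{monopoly}, citing Biquard's theorem that $S^1 \times N$ has no basic classes when $N \to \Sigma$ is a nontrivial circle bundle over a hyperbolic surface; since you correctly compute $b_+(X) = 2g > 1$, Taubes' theorem then forces $X$ to be non-symplectic. Concretely, the $T^2$-fiber of $X \to \Sigma$ is rationally null-homologous (the circle fiber of $N$ is $e$-torsion in $H_1(N;\ZZ)$), so Thurston's construction does not apply, and your attempted form $d(t\alpha)+\cdots$ is ill-defined because $t$ is circle-valued. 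With no basic classes on $X$, the blow-up formula gives none on $M$ either, so the Witten vanishing argument never starts. The paper's actual Seiberg--Witten proof is far more delicate: it uses Kronheimer's technique of \emph{mock-monopole classes} --- characteristic classes satisfying the scalar-curvature inequality \eqref{virtue} for every metric, with no claim that the equations admit solutions on $M$ itself --- obtained by equipping $N$ with a taut foliation, symplectically capping cylinders $N \times [0,\ell]$ at both ends, solving Seiberg--Witten there, and letting $\ell \to \infty$.

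Your Schoen--Yau alternative is closer in spirit to the paper's \S\ref{schoen-yau}, but the sketch also has problems. Passing to a finite cover multiplies the number of blown-up points rather than removing them, so you cannot reduce to $k=0$ that way. The paper handles $M = X \# P$ directly: it pulls back the generator of $H^1(S^1;\ZZ)$ via the collapse $M \to X \to S^1$, represents its Poincar\'e dual by a mass-minimizing current $\sum n_j Z_j$, and then --- this is the step requiring real care --- lifts to the infinite cyclic cover and uses an intersection count with a reference circle to show that some component $Z_j$ admits a nonzero-degree map to $N$. Since $N$ is aspherical with $b_1(N) \neq 0$, a further stable-minimal-surface argument inside $Z_j$ then gives the contradiction. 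The enlargeability route you mention would also work, but you have not actually carried it out.
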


In  \S\S \ref{schoen-yau}--\ref{monopoly} below, 
we will  prove this lemma twice, in  two entirely  different ways. 
In the meantime, though, we   begin by carefully explaining why this lemma suffices to imply our main results. 

\begin{prop}
\label{lynx} 
 Lemma \ref{crux} implies Theorems \ref{ellipsis} and \ref{parity}. 
\end{prop}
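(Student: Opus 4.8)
The plan is to note that, after the reductions already made in this section, Theorems \ref{ellipsis} and \ref{parity} come down to a single claim --- that a compact complex surface $(M,J)$ with $\kod(M,J)=1$ and $b_1(M)$ odd admits no Riemannian metric of positive scalar curvature --- and then to derive this claim from Lemma \ref{crux} by a covering argument. Granting the claim, one gets $\mathscr{Y}(M)\le 0$; but also $\mathscr{Y}(M)\ge 0$, since such an $M$ is in particular elliptic \cite[Corollary 1]{lky}, so $\mathscr{Y}(M)=0$, which is the $\kod=1$ case of Theorem \ref{ellipsis}. The minimal model $X$ of $M$ is again a (minimal) properly elliptic surface with $b_1(X)$ odd, because blowing down alters neither $\kod$ nor $b_1$; so the same reasoning gives $\mathscr{Y}(X)=0=\mathscr{Y}(M)$, the $\kod=1$ case of Theorem \ref{parity}. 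Since the cases $\kod=0$ and $\kod=2$ of both theorems were already settled above --- respectively by way of Taubes's theorem and by way of \cite{lky} --- establishing the claim finishes everything.

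To prove the claim, I would invoke the Enriques--Kodaira classification \cite{bpv} of non-K\"ahler surfaces and first reduce to the minimal model $X$ of $M$, so that $M$ is the blow-up of the minimal properly elliptic surface $X$ at finitely many points and $b_1(X)$ is odd. Such an $X$ admits a finite unramified covering $\widehat{X}$ that is a holomorphic principal bundle with fiber a fixed elliptic curve $E$ over a compact Riemann surface $\Sigma$: the oddness of $b_1$ forces $\chi_{\mathrm{top}}(X)=0$, so the fibration has no singular fibers beyond multiples of smooth ones and hence (a holomorphic map from a compact curve to the $j$-line missing $\infty$ being constant) has constant modulus; the monodromy therefore lies in the finite group $\Aut(E,0)$, and a further finite base change both kills this monodromy and removes all multiple fibers. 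Because $\kod$ is unchanged under passage to finite unramified covers, $\kod(\widehat{X})=1$, which forces $g(\Sigma)\ge 2$; and because a finite unramified cover of the non-K\"ahler surface $X$ is itself non-K\"ahler, $b_1(\widehat{X})$ is odd, so the underlying smooth principal $\mathbb{T}^2$-bundle has \emph{nonzero} Euler class in $H^2(\Sigma;\ZZ^2)\cong\ZZ^2$. A change of the $\mathbb{T}^2$ coordinates by an element of $\mathrm{GL}(2,\ZZ)$ normalizes this Euler class to $(d,0)$ with $d\ne 0$, so that $\widehat{X}$ is diffeomorphic to $N\times S^1$, where $N\to\Sigma$ is the principal $\mathbf{U}(1)$-bundle of Chern class $d$.

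It then remains only to carry the nonexistence of positive scalar curvature back down to $M$. Write $M\cong_{\mathrm{diff}}X\#k\,\overline{\CP}_2$ as the blow-up of $X$ at $k$ points, and let $\widehat{X}\to X$ be the degree-$d$ cover produced above; blowing up $\widehat{X}$ at the $kd$ preimages of those points yields a space $\widehat{M}$ that is a degree-$d$ unramified cover of $M$ and satisfies $\widehat{M}\cong_{\mathrm{diff}}(N\times S^1)\#kd\,\overline{\CP}_2$. By Lemma \ref{crux}, $\widehat{M}$ admits no metric of positive scalar curvature; since such a metric would pull back to any finite cover, $M$ admits none either, and, as explained in the first paragraph, this yields $\mathscr{Y}(M)=\mathscr{Y}(X)=0$ and hence Theorems \ref{ellipsis} and \ref{parity}. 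The step I expect to be the real obstacle is the classification input of the second paragraph --- namely that a minimal properly elliptic surface with $b_1$ odd becomes, after a finite unramified cover, a nontrivial principal $\mathbb{T}^2$-bundle over a curve of genus $\ge 2$ --- since it is there that the structure theory of non-K\"ahler elliptic surfaces is genuinely used.
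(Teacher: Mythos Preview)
Your proof is correct and follows essentially the same route as the paper: reduce to the minimal model, pass to a finite unramified cover diffeomorphic to $N\times S^1$ for a nontrivial circle bundle $N$ over a hyperbolic curve, and then apply Lemma~\ref{crux} to the induced cover of $M$. The paper differs only in the bookkeeping of the classification step --- it uses Br\^{\i}nz\u{a}nescu's triviality of $R^1\varpi_*\mathcal{O}$ rather than $\chi_{\mathrm{top}}(X)=0$ to rule out bad singular fibers, and it explicitly verifies (via $\chi^{\mathrm{orb}}(\Sigma)\le 0$, forced by $\kod=1$) that the orbifold base is good, so that the finite base change removing the multiple fibers actually exists; you should add this last check, but otherwise the arguments coincide.
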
 
\begin{proof} Per the above discussion,  it suffices to show  that  whenever $b_1(M)$  is {odd} and  $\kod (M,J) =1$, 
the  smooth 
$4$-manifold $M$ cannot admit   metrics of positive scalar curvature. 
Let $(X,J^\prime)$  denote the minimal model of $(M,J)$. 
 Because $\kod (X,J^\prime) = \kod (M,J) =1$,  normalization of some pluricanonical map 
   defines  
 an elliptic fibration  $\varpi : X\to \Sigma$, where  $\Sigma$ is a   smooth  connected complex curve.  Because $b_1(X)=b_1(M)$ is odd,
 there must be an element of $H^1(X, \mathcal{O})$ that is non-trivial on some fiber. An argument due to 
Br\^{\i}nz\u{a}nescu \cite{brin-elliptic} therefore shows that first direct image sheaf 
 $\varpi_{*}^1 \mathcal{O}$ must be   a holomorphically trivial  line bundle on $\Sigma$, 
 because its degree is {\em a priori}   non-positive by  \cite[Theorem III.18.2]{bpv}. Hence 
no  fiber 
can  just be a union of rational curves,   and  $\varpi$ can therefore at worst  have multiple fibers with smooth reduction. 
 We 
 now equip $\Sigma$ with an orbifold structure by giving each point 
 a  weight equal to the multiplicity of the corresponding fiber. Because $\kod (X,J^\prime) = \kod (M,J) = 1$, 
 we  must   have $\chi^{orb}(\Sigma) \leq 0$,  because $(X,J^\prime)$ would  otherwise \cite[\S 2.7]{FMbook} be a Hopf surface, and so 
 have $\kod = -\infty$. In particular,  $\Sigma$ must be a    {\em good orbifold}  in the sense of Thurston \cite{thurston-orb}.
 Thus   $\Sigma= \widehat{\Sigma}/\Gamma$, where $\widehat{\Sigma}$ is a smooth complex curve of positive genus, 
and where the finite group $\Gamma$ acts  biholomorphically on $\widehat{\Sigma}$. 
Pulling 
$X$ back to $\widehat{\Sigma}$ then produces an (unramifed)  cover $\widehat{X}\to X$ equipped with a 
holomorphic submersion  $\widehat{\varpi}: \widehat{X}\to \widehat{\Sigma}$.  However, the $j$-invariant of the fibers now defines a holomorphic map 
$\widehat{\Sigma}\to \CC$, and this map is of course constant because $\widehat{\Sigma}$ is compact. Thus 
$\widehat{\varpi}$ is locally {holomorphically}  trivial, with fibers isomorphic to  some elliptic curve $E$, 
and the obstruction to this being a principal $E$-bundle is then measured by 
 the rotational  monodromy map 
$\pi_1 (\widehat{\Sigma})\to \Aut (E)/E$. However,  $\Aut (E)$ is compact, so 
$\Aut (E)/E = \pi_0 (\Aut(E))$ is finite, and 
we may therefore kill  this monodromy simply  by replacing $\widehat{\Sigma}$ with a finite  cover if necessary. 
This shows \cite{brin-elliptic,wall-elliptic} that there is a finite cover $\widehat{X}$ of $X$ that  is just 
a principal $[\mathbf{U}(1)\times \mathbf{U}(1)]$-bundle over a compact complex curve $\widehat{\Sigma}$.
Since $\kod (\widehat{X}) = 1$,
the base $\widehat{\Sigma}$ must necessarily have genus $\geq 2$. Moreover,  $b_1(\widehat{X})$ is necessarily odd, because 
 otherwise we could produce a forbidden K\"ahler metric on $X$ by taking fiber-averages of local push-forwards of some K\"ahler metric
on $\widehat{X}$. It follows that the Chern classes  of the two $\mathbf{U}(1)$-factors of our principal $[\mathbf{U}(1)\times \mathbf{U}(1)]$-bundle $\widehat{X}\to \widehat{\Sigma}$, must be 
 linearly {\em dependent} over $\mathbb{Q}$. 
Thus,   by again passing to a cover  and then changing basis if necessary, we then may arrange for  
exactly one of these Chern classes to be  non-zero. This shows that $X$ has an unbranched cover 
  $\widehat{X} \approx N\times S^1$, where  $N\to \widehat{\Sigma}$ is a principal $\mathbf{U}(1)$-bundle of non-zero degree 
over a Riemann surface of genus $\geq 2$. 
Since $M$  is obtained from $X$ by blowing up points, we 
therefore   obtain an induced  unbranched cover $\widehat{M} \to M$ with $\widehat{M}\approx (N\times S^1) \# k \overline{\CP}_2$.
Lemma \ref{crux} now asserts that $\widehat{M}$ cannot carry a metric of positive scalar curvature, and, since we can  pull Riemannian metrics back via 
  $\widehat{M}\to M$,   this then implies that  $M$ cannot carry a positive-scalar-curvature 
    metric either. Hence Lemma \ref{crux} implies that $\mathscr{Y}(M)=0$. Moreover, since 
the same argument in particular applies to $X$, Lemma \ref{crux}  also  implies $\mathscr{Y}(M)=\mathscr{Y}(X)$. 
Hence  Lemma \ref{crux}  implies both Theorem \ref{ellipsis} and 
Theorem \ref{parity},    as claimed. 
 \end{proof}

 \pagebreak

\section{Proof via Stable Minimal Hypersurfaces}
\label{schoen-yau}

In this section, we give a proof of Lemma \ref{crux} largely    based on 
the Schoen-Yau  stable-minimal-hypersurface method \cite{syrger}. For the broader context, see \S \ref{conclusion}.

Let   $(M,g)$ be a smooth compact
oriented Riemannian $(\ell+1)$-manifold, $\ell\leq 6$, and  let $\mathsf{a} \in H^1 (M, \ZZ)$ be a non-trivial 
cohomology class. 
 Compactness results in geometric measure theory \cite{federer} 
 guarantee that there is a mass-minimizing  integral current $\mathscr{Z}$ that
represents the Poincar\'e dual homology class in $H_{\ell}(M, \ZZ )$, and, because $\ell < 7$, 
regularity results \cite{simons} then guarantee  \cite{lawson-montreal} that this current is just a sum 
$$\mathscr{Z}=\sum n_j Z_j$$ 
of  disjoint smooth embedded 
oriented compact connected 
$\ell$-dimensional hypersurfaces  $Z_j\subset M$ with positive integer multiplicities.

Because $\mathscr{Z}$ is mass-minimizing, each $Z_j$ must be a stable minimal hypersurface. 
Thus, if $Z= Z_j$ for some $j$, and if 
$Z(t)$ is any  smooth family of hypersurfaces in $M$ with $Z(0)= Z$,
then the $\ell$-dimensional volume $\mathscr{A}(t)$ of $Z(t)$ must satisfy
$$
\mathscr{A}^{\prime} (0) = 0, \qquad \mathscr{A}^{\prime \prime} (0) \geq  0.
$$
If $h$ and $\gemini$ denote the induced metric and second fundamental form of $Z$, the vanishing of $\mathscr{A}^{\prime} (0)$ 
is of course equivalant to the vanishing of the mean curvature $H = h^{ij} \gemini_{ij}$. On the other hand, 
if the normal component of  $Z^{\prime}(0)$ for this family is given by 
$u \, \vec{\mathsf{n}}$, where $\vec{\mathsf{n}}$ is the unit normal vector of $Z$,
then, in conjunction with  the Gau{ss}-Codazzi equations,
  Jim  Simons' second-variation formula \cite[Theorem 3.2.2]{simons}  tells us that 
$$
\mathscr{A}^{\prime \prime} (0) =\int_Z \left[|\nabla u|^2 + 
 \frac{1}{2}(s_h-s_g -  |\gemini|^2)u^2\right] \,  d\mu_h
$$
so that the  homologically-mass-minimizing property  of $\mathscr{Z}$ implies that 
\begin{equation}
\label{jimmy}
 \int_Z \left[2|\nabla u|^2 + 
s_h u^2\right]\, d\mu_h \geq  \int_Z s_g u^2\, d\mu_h
\end{equation}
for every smooth function $u$ on $Z$. 

We  now assume  that  $(M,g)$ has positive scalar curvature $s_g > 0$,
and then notice that \eqref{jimmy} implies that 
\begin{equation}
\label{james}
 \int_Z \left[2|\nabla u|^2 + 
s_h u^2\right]\, d\mu_h  > 0
\end{equation}
for every smooth function $u\not\equiv 0$ on $Z$. 
If $\ell =2$, plugging  $u \equiv 1$ into \eqref{james} immediately  implies  that $\chi (Z) > 0$ by classical Gauss-Bonnet, and 
classical uniformization therefore tells us that there is a conformal diffeomorphism between $(Z^2,h)$ and 
and $(S^2, \widehat{h})$, where $\widehat{h}$ is the  standard  unit-sphere metric of scalar curvature $+2$.  Similarly, when $\ell \geq 3$,
the induced metric $h$ is also  conformal to a metric $\widehat{h}$ of positive scalar curvature. Indeed, 
if   $p = \frac{2\ell}{\ell-2}$ and if $u > 0$, any conformally related metric  $\widehat{h}= u^{p-2}h$ 
has volume form $\widehat{d\mu} = u^pd\mu$ and scalar curvature 
$$
\widehat{s} = u^{1-p} \left[(p+2) \Delta + s\right] u,
$$
where $\Delta = d^*d = -\nabla \cdot \nabla$, so  \eqref{james} implies that  any $\widehat{h}$ conformal to $h$  satisfies 
$$
\int_Z \widehat{s}\,  \widehat{d\mu}  = \int_Z [(p+2) |\nabla u|^2 + s_hu^2]\,d\mu_h\geq  \int_Z \left[2|\nabla u|^2 + 
s_h u^2\right]\, d\mu_h   > 0.
$$
It therefore suffices to recall that any conformal class $[h]$ on $Z$ contains a metric $\widehat{h} = 
u^{p-2}h$ whose scalar curvature does not change sign.
Here we could,  for example,  invoke Schoen's proof \cite{rick} of the Yamabe conjecture, and take 
 $\widehat{h}$ to be a Yamabe metric, which in particular has constant scalar curvature. 
 Or we could opt for the more elementary  trick, due to Trudinger \cite{trud}, of  just taking $u>0$ to belong to (and hence span)   the lowest eigenspace of the Yamabe Laplacian $(p+2) \Delta + s_h$.

 \bigskip 
 
 These ideas suffice  to provide a simple, clean proof of Lemma \ref{crux}. To see why, we begin by 
 introducing the following useful concept:

\begin{defn}
\label{expanse} 
A smooth  compact connected oriented $3$-manifold $N$  will be called {\sf expansive} if there is  
a smooth map $\phi : N\to V$ of non-zero degree to a   compact oriented 
aspherical $3$-manifold $V$ with $b_1(V)\neq 0$. 
\end{defn}

Here a compact manifold is called {\em aspherical} if   it is
an Eilenberg-MacLane space $K(\pi, 1)$. Thus,  a compact manifold is aspherical if and only if its universal cover
is contractible.  

\medskip

One   immediate consequence of Definition  \ref{expanse} is the following:
\begin{lemma}
\label{uno} 
If a smooth compact connected $3$-manifold $N^\prime$ admits a map of non-zero degree
to an expansive manifold $N$, then $N^\prime$  is itself expansive. 
\end{lemma}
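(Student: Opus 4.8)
The plan is to show that non-zero-degree maps compose in the sense required by Definition \ref{expanse}. Suppose $N^\prime$ admits a map $\psi : N^\prime \to N$ of non-zero degree, and that $N$ is expansive, so that by definition there is a map $\phi : N \to V$ of non-zero degree onto a compact oriented aspherical $3$-manifold $V$ with $b_1(V) \neq 0$. The natural candidate for the required map witnessing expansiveness of $N^\prime$ is the composition $\phi \circ \psi : N^\prime \to V$, since $V$ is already aspherical with $b_1(V) \neq 0$; the only thing that needs checking is that $\deg(\phi \circ \psi) \neq 0$.

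The key step is therefore multiplicativity of degree under composition: $\deg(\phi \circ \psi) = \deg(\phi)\cdot \deg(\psi)$. This is standard — it follows, for instance, from functoriality of the induced maps on top homology $H_3(-;\ZZ) \cong \ZZ$ (for connected, closed, oriented $3$-manifolds), since $(\phi\circ\psi)_* = \phi_* \circ \psi_*$ and each of $\phi_*$, $\psi_*$ is multiplication by the respective degree. (One subtlety worth a sentence: Definition \ref{expanse} only explicitly requires the target $V$ to be oriented, not the intermediate manifold $N$; but a compact connected $3$-manifold admitting a non-zero-degree map onto an \emph{oriented} manifold is automatically orientable, since a non-orientable manifold has $H_3 = 0$ with $\ZZ$ coefficients and hence admits only degree-zero maps to oriented targets — so $N$ is orientable and the degree calculus applies. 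Alternatively one simply builds orientability into the hypotheses, as the statement of Lemma \ref{uno} implicitly does by invoking "expansive" for $N$.) Since $\deg(\psi) \neq 0$ by hypothesis and $\deg(\phi) \neq 0$ by expansiveness of $N$, we get $\deg(\phi\circ\psi) \neq 0$, and hence $\phi \circ \psi$ exhibits $N^\prime$ as expansive.

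I don't expect any genuine obstacle here — this is a bookkeeping lemma whose entire content is "non-zero degree is transitive," packaged for later use (presumably it will be applied to finite covers, which have degree equal to the number of sheets, hence non-zero). The one place to be slightly careful is exactly the orientability remark above and making sure "map of non-zero degree" is interpreted consistently (integral degree via fundamental classes), but both are routine. The proof will be two or three sentences.

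\begin{proof}
Let $\psi : N^\prime \to N$ be a map of non-zero degree, and let $\phi : N\to V$ be a map of non-zero degree to a compact oriented aspherical $3$-manifold $V$ with $b_1(V)\neq 0$, as provided by the expansiveness of $N$. Since $N$ admits a non-zero-degree map to the oriented manifold $V$, it is itself orientable, so the notion of degree is defined for $\psi$ and for the composition $\phi\circ\psi : N^\prime \to V$. By functoriality of the induced homomorphism on $H_3(\,\cdot\,;\ZZ)\cong \ZZ$, we have $\deg(\phi\circ\psi)=\deg(\phi)\cdot\deg(\psi)\neq 0$. As $V$ is aspherical with $b_1(V)\neq 0$, the map $\phi\circ\psi$ witnesses that $N^\prime$ is expansive.
\end{proof}
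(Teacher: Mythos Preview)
Your proof is correct and takes essentially the same approach as the paper: compose $\psi$ with the map $\phi:N\to V$ furnished by expansiveness of $N$, and use multiplicativity of degree under composition to see that $\phi\circ\psi$ has non-zero degree. The paper's proof is in fact just the single line $\deg(\phi\circ\psi)=\deg(\phi)\deg(\psi)$, without your additional orientability remark.
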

\begin{proof} If $\psi : N^\prime \to N$ and $\phi : N\to V$, then $\deg (\phi \circ \psi) = \deg (\phi) \deg (\psi)$. 
\end{proof}

By contrast, the following key   consequence is distinctly   less trivial:
\begin{lemma}
\label{expansive}
Expansive $3$-manifolds never admit  metrics of positive scalar curvature. 
\end{lemma}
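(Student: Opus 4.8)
The plan is to combine the Schoen--Yau descent just developed with the classical fact that nonzero-degree maps are injective on rational cohomology (up to the appropriate transfer argument), so that an expansive $3$-manifold carrying positive scalar curvature would force a stable minimal surface inside it whose existence contradicts asphericity of the target. More precisely, suppose $(N^3,g)$ has $s_g > 0$ and that $\phi : N \to V$ has nonzero degree onto a compact oriented aspherical $3$-manifold $V$ with $b_1(V) \neq 0$. First I would pick a nontrivial class $\mathsf{b} \in H^1(V,\ZZ)$ and pull it back to a class $\mathsf{a} = \phi^*\mathsf{b} \in H^1(N,\ZZ)$; because $\phi$ has nonzero degree, $\phi^*$ is injective on $H^1(\cdot,\mathbb{Q})$ (Poincar\'e duality plus $\phi_*\phi^* = (\deg \phi)\,\mathrm{id}$ on $H_2$), so $\mathsf{a}$ is nontrivial. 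Apply the Schoen--Yau machinery from the start of this section with $\ell = 2$: there is a mass-minimizing integral current $\mathscr{Z} = \sum n_j Z_j$ Poincar\'e dual to $\mathsf{a}$, each $Z_j$ a smooth embedded stable minimal surface, and inequality \eqref{james} with $u \equiv 1$ forces $\chi(Z_j) > 0$, i.e.\ every component $Z_j$ is a $2$-sphere.

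Next I would derive a contradiction from the fact that a component $Z = Z_j$ is an embedded $S^2$ in $N$ that is homologically essential in a way detected by $\mathsf{a}$. The point is that $[Z] \in H_2(N,\ZZ)$ is Poincar\'e dual to $\mathsf{a} = \phi^*\mathsf{b}$, so $\phi_*[Z] \in H_2(V,\ZZ)$ is (up to the multiplicity bookkeeping, which I would handle by noting $\langle \mathsf{b}, \phi_*[Z_j]\rangle$ summed against $n_j$ equals $(\deg\phi)\langle \mathsf{b}\smile\mathsf{b}',[V]\rangle$ for a complementary class, hence is nonzero for a suitable choice) nonzero in $H_2(V,\mathbb{Q})$. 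But $\phi|_Z : S^2 \to V$ factors through the universal cover $\widetilde{V}$ since $S^2$ is simply connected, and $\widetilde{V}$ is contractible because $V$ is aspherical; hence $\phi_*[Z] = 0$ in $H_2(V,\ZZ)$. Summing over $j$ with multiplicities then gives $\phi_*[\mathscr{Z}] = 0$, contradicting that the pairing of $\mathsf{b}$ (or an associated class) with it is nonzero. That contradiction shows $(N,g)$ cannot have $s_g > 0$.

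The main obstacle I expect is the bookkeeping that turns ``each component is an $S^2$ that dies in $\widetilde{V}$'' into ``$\mathscr{Z}$ itself is homologically trivial in $V$,'' done carefully enough to contradict the nontriviality of $\mathsf{a}$. One has to be slightly careful because $\phi^*\mathsf{b} \neq 0$ only gives that the dual homology class is nonzero, and one must pair it with something—naturally with $\phi^*(\text{a complementary } H^1 \text{ or } H^2 \text{ class})$—to extract a numerical invariant that is simultaneously (a) computable downstairs via $\deg\phi$ and the cup product structure of $V$, and (b) forced to vanish because each minimal $S^2$ bounds in $\widetilde{V}$. A clean way to organize this is: the restriction $\phi^*\mathsf{b}|_{Z}$ is the pullback of $\mathsf{b}$ under a map $S^2 \to V$ that lifts to $\widetilde{V}$, and since $H^1(\widetilde{V},\ZZ) = 0$ this restriction vanishes; then $0 = \langle (\phi^*\mathsf{b})|_{Z_j}, \cdot\rangle$ for all $j$, but one still needs to feed this back into a global pairing on $N$. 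The honest statement one wants is $\langle \mathsf{a}, [Z_j]\rangle$-type evaluations, and since $[Z_j]$ is already the Poincar\'e dual of $\mathsf{a}$ (up to multiplicities), I would instead evaluate $\mathsf{a}$ against $[Z_j]$ by intersecting with another cycle: choose $\mathsf{a}'\in H^1(N,\ZZ)$ with $\mathsf{a}\smile\mathsf{a}' \neq 0$ in $H^2$ (possible after possibly adjusting the choice of $\mathsf{b}$, using $b_1(V)\neq 0$ and Poincar\'e duality on $V$ pulled back), represent the dual of $\mathsf{a}'$ by a cycle $C$, and compute the intersection number $\mathscr{Z}\cdot C = \sum n_j\, (Z_j\cdot C)$; this is nonzero by construction, yet restricting to each $S^2$ component and pushing to $\widetilde V$ shows each $Z_j \cdot C$-contribution is detected by $(\phi^*\mathsf{a}')|_{Z_j} \in H^1(Z_j) = 0$, forcing the total to vanish — the desired contradiction. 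I would present this last maneuver as the technical heart of the argument.

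Finally, I would remark that, granting Lemma \ref{uno}, it suffices by definition to treat $N = V$ itself aspherical with $b_1 \neq 0$ (the general expansive case then follows since a nonzero-degree map $N \to V$ composes with $V\to V$), which streamlines the exposition; and that the hypothesis $b_1(V)\neq 0$ is exactly what supplies the class $\mathsf{a}$ needed to launch the Schoen--Yau minimal surface, while asphericity is exactly what kills the resulting $2$-sphere. This makes clear that Lemma \ref{crux} follows, once one checks (as will be done) that $N \times S^1$ for $N$ a nontrivial circle bundle over a genus $\geq 2$ surface is expansive and that blow-ups do not destroy this property at the level of the relevant cohomology.
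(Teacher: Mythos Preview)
Your overall strategy coincides with the paper's: minimize in the homology class Poincar\'e dual to a pulled-back $H^1$ class, use the second variation to force every component to be an $S^2$, and then use asphericity of $V$ (via $\pi_2(V)=0$) to kill $\phi_*[Z_j]$ in $H_2(V)$. The paper's proof is exactly this. Where you diverge is in the ``bookkeeping'' step, and there your version does not quite work as written, while the paper's is a one-line computation.

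Specifically, your attempt to extract a numerical contradiction by choosing a second class $\mathsf{a}'\in H^1(N,\ZZ)$ and intersecting has a dimension mismatch: in a $3$-manifold the Poincar\'e dual of $\mathsf{a}'\in H^1$ is a $2$-cycle $C$, so $\mathscr{Z}\cdot C$ lands in $H_1$, not in $\ZZ$; and the condition ``$\mathsf{a}\smile\mathsf{a}'\neq 0$ in $H^2$'' is neither a number nor something Poincar\'e duality on $V$ hands you from $b_1(V)\neq 0$ alone. The clean resolution (and the paper's) is to work on $V$, not $N$: pick $\mathsf{a}\in H^1(V,\ZZ)$ nonzero, then by Poincar\'e duality on $V$ choose $\mathsf{b}\in H^2(V,\ZZ)$ with $\int_V \mathsf{a}\cup\mathsf{b}\neq 0$. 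Now $\phi_*[Z_j]=0$ for each $j$ gives
\[
0=\sum_j n_j \int_{\phi_*[Z_j]}\mathsf{b}
=\int_N \phi^*\mathsf{a}\cup\phi^*\mathsf{b}
=\deg(\phi)\int_V \mathsf{a}\cup\mathsf{b}\neq 0,
\]
which is the whole contradiction in one line. There is no need to first verify $\phi^*\mathsf{a}\neq 0$ separately.

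Finally, your closing remark that one may ``reduce to $N=V$'' via Lemma~\ref{uno} is incorrect. Lemma~\ref{uno} transports \emph{expansiveness} along nonzero-degree maps, not the nonexistence of positive-scalar-curvature metrics; knowing that $V$ itself admits no such metric tells you nothing about $N$, since a nonzero-degree map $N\to V$ does not push metrics forward. The argument genuinely has to be carried out on $N$ with the map $\phi$ in hand, exactly as you did in your main paragraphs.
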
 
\begin{proof} 
If $N$ is an expansive $3$-manifold, then, by definition, there is a smooth map $\phi : N\to V$ of $\deg (\phi ) \neq 0$,
where $V$ is a $K(\pi , 1)$ with $b_1(V) \neq 0$. Since $b_1(V)\neq 0$, there exists some non-zero  $\mathsf{a}\in H^1(V,\ZZ)$.
 Poincar\'e duality then guarantees the existence of some $\mathsf{b}\in H^2(V,\ZZ)$ with $\int_V \mathsf{a}\cup \mathsf{b}\neq 0$. 

We now proceed by  contradiction. Suppose  that $N$ admits a Riemannian metric $g$ 
of positive scalar curvature.  Let  $\mathscr{Z}$ be a mass-minimizing integral current representing 
the Poincar\'e dual of $\phi^*\mathbf{a}$ in $H_2(N,\ZZ)$, so that 
 $$\mathscr{Z}= \sum n_jZ_j$$
for some collection of compact embedded oriented surfaces $Z_j$ and integer coefficients $n_j$. 
Since $g$ has $s> 0$, the above second-variation argument implies that each $Z_j$ is a $2$-sphere. 
However, since $\pi_2 (V) =0$, this means that $\phi_* ([Z_j]) \in H_2 (V, \ZZ)$ must vanish for every $j$. 
Hence 
$$0 =\sum_j n_j \int_{\phi_*[Z_j]} \mathsf{b} =\int_{ \sum_j n_j  [Z_j]} \phi^*\mathsf{b}=\int_{N}  \phi^*\mathsf{a}\cup  \phi^*\mathsf{b}
= \deg (\phi ) \int_{V}  \mathsf{a}\cup  \mathsf{b} \neq 0.
$$
This  contradiction therefore shows  that such a metric cannot   exist.  
\end{proof}

Similar    ideas therefore now allow us to  deduce  the following result: 

\begin{thm} 
\label{taurus}
Let $N^3$ be an  {expansive}   $3$-manifold,
and let $X^4$ be a smooth compact
oriented $4$-manifold that admits a  smooth   submersion $\pip : X\to S^1$
 with fiber $N$. Let $P$ be any connected smooth  compact oriented $4$-manifold,
 and let $M= X\# P$. Then  $M$ does not admit Riemannian metrics of
 positive scalar curvature, and therefore   satisfies  $\mathscr{Y}(M)\leq 0$. 
\end{thm}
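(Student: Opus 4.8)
The plan is to run the Schoen--Yau stable-minimal-hypersurface machinery one dimension up, using the $S^1$-fibration structure to produce a codimension-one submanifold that is itself expansive, and then invoke Lemma~\ref{expansive}. Concretely, suppose for contradiction that $M = X\# P$ carries a metric $g$ of positive scalar curvature. The map $\pip: X\to S^1$ pulls back the generator of $H^1(S^1,\ZZ)$ to a class $\mathsf{a}\in H^1(X,\ZZ)$ whose Poincar\'e dual is represented by the fiber $N$; moreover, since the connected sum with $P$ does not kill this class (the inclusion $X\setminus B^4 \hookrightarrow M$ induces an isomorphism on $H^1$ for $H^1$ of a connected sum), we get a corresponding nonzero class $\mathsf{a}\in H^1(M,\ZZ)$. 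Take $\mathscr{Z}=\sum n_j Z_j$ a homologically mass-minimizing integral current in $M$ representing the Poincar\'e dual of $\mathsf{a}$ in $H_3(M,\ZZ)$; since $\dim M = 4 < 7$, regularity applies and each $Z_j$ is a smooth closed oriented embedded $3$-manifold, and stability plus $s_g>0$ gives, via \eqref{james}, that each $Z_j$ admits a metric of positive scalar curvature. By Lemma~\ref{expansive}, no $Z_j$ is expansive.

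The crux is then a homological argument showing that at least one $Z_j$ \emph{must} be expansive, which is the contradiction. First, the class $[\mathscr{Z}]\in H_3(M,\ZZ)$ is Poincar\'e dual to $\mathsf{a}$, which comes from $X$ and is supported away from the $P$-summand; so after an isotopy we may assume all the $Z_j$ lie in $X\setminus B^4\subset X$, and it suffices to argue inside $X$. Now $\sum n_j [Z_j] = [N] = \mathrm{PD}(\pip^*(\text{pt}))$ in $H_3(X,\ZZ)$. Composing with $\pip : X\to S^1$, the degree of $\pip|_{Z_j}: Z_j\to S^1$ is the intersection number $[Z_j]\cdot \mathsf{a}'$ for a suitable dual class, and these degrees sum (with multiplicities $n_j$) to $[N]\cdot \mathsf{a} = 1$. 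Hence some $Z_j$ has $\deg(\pip|_{Z_j})\neq 0$. Since $\pip|_{Z_j}$ is a map of nonzero degree to $S^1$, it is homotopic to a submersion, whose fiber $F$ is a closed oriented surface and whose total space is a mapping torus; but more usefully, a degree-$d\neq 0$ map $Z_j\to S^1$ together with the fibration $\pip:X\to S^1$ with fiber $N$ means $Z_j$ maps to $X$ in a way compatible with the circle coordinate, and restricting to a fiber produces a map $Z_j' \to N$ of nonzero degree, where $Z_j'$ is obtained from $Z_j$ by a surgery along the $S^1$-direction --- alternatively, and more cleanly, one shows directly that $Z_j$ admits a nonzero-degree map to $N$. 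By Lemma~\ref{uno}, $Z_j$ is then expansive, contradicting the previous paragraph.

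The step I expect to be the main obstacle is making the "some $Z_j$ maps with nonzero degree to $N$" claim precise and correct. The naive intersection-theoretic count only directly gives that some $Z_j$ has nonzero degree onto $S^1$ under $\pip$; upgrading this to a nonzero-degree map $Z_j\to N$ requires care. The right way is probably: perturb $\pip|_{Z_j}$ to be transverse to a regular value $\theta_0\in S^1$, so that $F:=\pip^{-1}(\theta_0)\cap Z_j$ is a closed oriented surface in $N = \pip^{-1}(\theta_0)$ whose fundamental class in $H_2(N,\ZZ)$ equals (up to the sign/multiplicity bookkeeping) $\mathsf{b}$-type data, and then observe that the inclusion $F\hookrightarrow N$ need not have nonzero degree. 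So instead one should cut $X$ along $N$ to get a cobordism $W$ from $N$ to $N$ with $\pip$ becoming a map $W\to [0,1]$; the surface-with-boundary $Z_j\cap W$ then realizes a homology between the two boundary copies of $F$, and a standard argument (as in Schoen--Yau, or Gromov--Lawson's "manifolds that don't admit PSC") shows $Z_j$ fibers over $S^1$ with fiber a surface mapping to $N$ --- and then one uses that the $Z_j$ with nonzero $S^1$-degree, being a surface bundle over $S^1$, has a natural degree-one-ish map detecting the expansiveness of $N$ via $\phi\circ(\text{projection to }N)$. Getting the bookkeeping of multiplicities, orientations, and the connected-sum region exactly right --- in particular verifying that the $P$-summand genuinely cannot absorb the relevant homology and that the surgery/cut-and-paste produces an honest nonzero-degree map to the \emph{expansive} $V$ underlying $N$ --- is where the real work lies; everything else is a direct transcription of the $\ell=2$ case of the second-variation argument already set up in this section together with Lemmas~\ref{uno} and~\ref{expansive}.
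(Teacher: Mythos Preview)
Your overall strategy matches the paper's: run Schoen--Yau on the Poincar\'e dual of a generator of $H^1(M,\ZZ)$ coming from the $S^1$-direction, deduce that each component $Z_j$ of a mass-minimizer carries a positive-scalar-curvature metric, and then derive a contradiction by showing that some $Z_j$ admits a nonzero-degree map to $N$ and is therefore expansive. The second-variation half of your argument is fine.

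The gap is exactly where you suspect it is, and your proposed fix does not work. You write that ``after an isotopy we may assume all the $Z_j$ lie in $X\setminus B^4$.'' This is not justified: the $Z_j$ are the components of a mass-minimizing current for the metric $g$ on $M$, and there is no reason any of them can be isotoped away from the $P$-summand. Even the weaker move of replacing each $Z_j$ by something homologous in $X\setminus B^4$ fails for your purposes, since homologous $3$-manifolds need not have the same diffeomorphism type, and expansiveness is a property of the manifold $Z_j$ itself. Similarly, ``$\pip|_{Z_j}$'' is not defined, since $\pip$ lives on $X$, not on $M$; and even once you extend $\pip$ across the connect-sum region, knowing that $Z_j$ has nonzero intersection with a circle transverse to $N$ only tells you that the induced map $Z_j\to S^1$ is nontrivial on $H^1$, which by itself does not produce a map $Z_j\to N$.

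The paper resolves this as follows. First, it extends $\pip$ to $f=\pip\circ\bd:M\to S^1$, where $\bd:M\to X$ is the collapsing map that crushes $P\setminus B^4$ to a point. Second, and this is the key idea you are missing, it shows that $f|_{Z_j}$ is null-homotopic: one writes $f=e^{2\pi i w}\widehat{f}$ where $\widehat f=\prod f_j^{n_j}$ is built from defining maps of the $Z_j$ in disjoint tubes, so that $\widehat f$ is \emph{constant} on each $Z_j$. Hence each inclusion $Z_j\hookrightarrow M$ lifts to the infinite cyclic cover $\widetilde M\to M$ determined by $\ker f_*$. Third, the mapping-torus description $X=\taurus_\varphi$ gives $\widetilde M\cong (N\times\RR)\#\bigl(\#_\ell P\bigr)$ together with a lifted collapsing map $\widehat\bd:\widetilde M\to N\times\RR$; composing the lift $\widetilde\incl:Z_j\hookrightarrow\widetilde M$ with $\widehat\bd$ and the projection $N\times\RR\to N$ finally yields an honest smooth map $Z_j\to N$. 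Its degree is computed as the intersection number of $Z_j$ with a reference circle in $M$ coming from a fixed point of $\varphi$, and the relation $\sum n_j[Z_j]=[N]$ forces this to be nonzero for at least one $j$. That $Z_j$ is then expansive by Lemma~\ref{uno}, contradicting Lemma~\ref{expansive}.

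So the missing idea is the lift to the infinite cyclic cover, made possible by the explicit null-homotopy of $f$ on each $Z_j$; this is what replaces your attempted isotopy and gives a genuine map to $N$ rather than just to $S^1$.
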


\begin{proof}
Let $\bd : X\# P\to X$ be the  smooth ``blowing down'' map that collapses $(P-B^4)\subset M$ to a point, and let 
$$f = \pip \circ \bd : M\to S^1= \mathbf{U}(1)$$
be  the induced projection.   We now  recall that 
\begin{equation}
\label{doh}
H^1(M, \ZZ) =  C^{\infty} (M, \mathbf{U}(1)) / \exp\,  [2\pi \mathsf{i}\, C^{\infty} (M, \RR)],
\end{equation}
because there is short exact sequence of sheaves of Abelian groups
$$0\to \ZZ\to C^\infty (\underline{\quad}, \RR)  \stackrel{\exp 2\pi \mathsf{i}\cdot }{\longrightarrow}   C^\infty (\underline{\quad}, \mathbf{U}(1)) \to 0,$$
where $C^\infty (\underline{\quad}, \RR)$ is a fine sheaf. 
The pre-image $f^{-1} (z )$ of a regular value is thus a copy
of $N$ in $M$ whose  homology class $[N] \in H_3(M, \ZZ)$ is 
Poincar\'e dual to $[f]\in  H^1 (M, \ZZ)$. Given any metric $g$ on $M$, we 
 now represent the homology class
$[N]$ by a mass-minimizing rectifiable current $\mathscr{Z}$, which can then be written as  a sum of 
smooth oriented connected compact hypersurfaces $Z_1, \ldots , Z_k$ with positive integer multiplicities; in particular, 
\begin{equation}
\label{play} 
[N] = \sum_{j=1}^k n_j[Z_j], \qquad n_j\in \ZZ^+.
\end{equation}
We now choose  a pairwise-disjoint collection of closed tubular neighborhoods $U_j\approx Z_j \times [-1,1]$ of the $Z_j$, and then  
express each  $Z_j$ as $f_j^{-1}(1)$  for a smooth map $f_j: M\to \mathbf{U}(1)$
that is $\equiv -1$  outside of  $U_j$, and given on $U_j$ by $f_j= e^{ i \Upsilon_j}$ for a 
smooth orientation-compatible defining function 
 $\Upsilon_j : U_j \to [-\pi, \pi]$  for $Z_j$ that is $\equiv \pm \pi$ 
 near each boundary component of $U_j$. Next, we define 
$$\widehat{f} : = \prod_jf_j^{n_j} : M\to  \mathbf{U}(1)$$
where the product is of course taken  point-wise in  $\mathbf{U}(1)$.
Since each $[f_j]$ is Poincar\'e dual to $[Z_j]$, it  follows from \eqref{play} that 
$[\widehat{f}]$ is therefore Poincar\'e dual to $\sum n_j [Z_j] =  [N]$. 
Since $f$ and $\widehat{f}$ therefore represent the same class in $H^1(M, \ZZ)$, equation 
\eqref{doh} therefore tells  that 
$$f = e^{2\pi \mathsf{i}w}  \widehat{f}$$
for some smooth real valued function $w: M\to \RR$, and we thus obtain an explicit homotopy of $\widehat{f}$ to $f$ by 
 setting $f_t = e^{2\pi \mathsf{i}tw}\widehat{f}$ for $t\in [0,1]$.
However,   $\widehat{f}$ is constant on each $Z_j$ by construction, so $f|_{Z_j}\sim \widehat{f}|_{Z_j}$ must induce the zero homomorphism
$\pi_1 (Z_j) \to \pi_1 (S^1)$, and  the inclusion map ${\incl}_j: Z_j\hookrightarrow M$ therefore 
lifts to an embedding  $\widetilde{\incl}_j:Z_j\hookrightarrow \widetilde{M}$ of this hypersurface in 
the covering space $\widetilde{M}\to M$ corresponding to  the kernel of $f_* : \pi_1(M) \to \pi_1(S^1)$. 


On the other hand, we can identify  $X$ with the mapping torus 
$$\taurus_\varphi := 
(N\times \RR) /\Big\langle \, (x, t) \longmapsto ( \varphi (x), t+2\pi ) \, \Big\rangle
$$
of some  diffeomorphism $\varphi: N\to N$ by simply 
 choosing a vector field on $X$ that projects to  $\partial/\partial \vartheta$ on $S^1$, and  then following the flow. 
Since  the diffeotype of  $\taurus_\varphi\to S^1$    only 
depends on the isotopy class of $\varphi$, we may also assume that $\varphi$
has a fixed-point $p\in N$. The flow-line $\{ p\} \times \RR$ then covers an embedded circle  $S^1 \hookrightarrow X$,
which we may moreover take to avoid the ball where  surgery is to be performed to  construct $M= X\# P$. 
This circle in $X$ then also defines  an embedded circle in $M$, which we will call the {\em reference circle}; 
and by first making a small perturbation, if necessary, we may assume that this reference circle $S^1\hookrightarrow M$
is also transverse to the $Z_j$. 
Since $N$ has intersection $+1$ with the reference circle, equation \eqref{play} tells us that at least one of the hypersurfaces 
$Z_j$ has non-zero intersection with the reference circle. 
Setting $Z= Z_j$ for some  such $j$,  
 we will  henceforth denote the corresponding inclusion map 
${\incl}_j$ by 
 ${\incl} : Z \hookrightarrow M$,  and its lift $\widetilde{\incl}_j$ by  $\widetilde{\incl} : Z \hookrightarrow \widetilde{M}$.
 Our mapping-torus model of $X$ now gives us a diffeomorphism $\widetilde{M}= (N\times \RR) \# ( \#_{\ell =1}^\infty P)$,
 along with  a blow-down map  $\widehat\bd  : \widetilde{M} \to N\times \RR$ that lifts 
 $\bd  : {M} \to X$. However,    $\{ p \} \times \RR\subset N \times \RR$ now meets $\widehat\bd \circ \widetilde{\incl} (Z )$ transversely in a set
 whose oriented count 
 exactly computes the homological intersection number $\mathfrak{n}\neq 0$
of our reference circle $S^1$ with $Z$. 
Thus,  if $\proj : N\times \RR \to N$ denotes the first-factor projection, 
the smooth map $\proj\circ  \widehat\bd \circ \widetilde{\incl} : Z \to N$ has  degree $\mathfrak{n}\neq 0$.

Since $N$ is expansive by hypothesis, this shows that $Z$ is also expansive by Lemma \ref{uno}. 
Hence the given metric  $g$ cannot possibly have positive scalar curvature. Indeed, if it did,
the induced metric $h$ on $Z=Z_j$ would be conformal to a metric $\widehat{h}$ of positive scalar curvature by the second-variation 
argument detailed above. But  since $Z$ is expansive, Lemma \ref{expansive} forbids the existence of  a positive-scalar-curvature metric
$\widehat{h}$ on $Z$. Hence $M$ cannot admit a metric $g$ of positive scalar curvature either,  and 
$\mathscr{Y}(M) \leq 0$, as claimed. 
\end{proof}

Theorem \ref{taurus} now immediately  implies Lemma \ref{crux}. 
Indeed, let $N\to \Sigma$ be any   non-trivial circle bundle over
a  hyperbolic Riemann surface. Then $N^3$  is  aspherical, because its
 universal cover $\mathcal{H}^2 \times \RR\approx \RR^3$ is contractible. 
 Moreover,  $b_1(N) = b_1(\Sigma) \neq 0$. Hence $N$ is expansive, because the 
  identity  $N\to N$ is  a degree-one map to a $K(\pi , 1)$ with $b_1\neq 0$. 
   We next   set  $X = N\times S^1$, so that the second-factor projection $\pip : X \to S^1$
 is then a submersion with fiber $N$.  Finally, letting $P$ be the  connected sum of $k$ copies of $\overline{\CP}_2$, 
we observe  that  the manifold $M= (N\times S^1)\# k \overline{\CP}_2$  of  Lemma \ref{crux}  is
one of the $4$-manifolds covered by  Theorem \ref{taurus}.  Since Lemma \ref{crux} implies 
Theorems \ref{ellipsis} and \ref{parity}  by 
Proposition \ref{lynx},  this provides one  complete    proof of our main results.

  Nonetheless, by invoking  Perelman's proof \cite{bbbpm,lott} of Thurston's geometrization conjecture, we can    improve  Theorem \ref{taurus} to yield the following {\sf sharp} result:

  \begin{thm} 
\label{sharp}
Let $N^3$ be compact oriented  connected   $3$-manifold,
and let $X^4$ be a smooth compact
oriented $4$-manifold that admits a  smooth   submersion $\pip : X\to S^1$
 with fiber $N$. Let $P$ be any connected smooth  compact oriented $4$-manifold,
 and let $M= X\# P$. Then $\mathscr{Y}(N) \leq 0\, \Longrightarrow \, \mathscr{Y}(M)\leq 0$. 
\end{thm}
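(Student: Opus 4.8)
The plan is to upgrade Theorem \ref{taurus} by replacing the ``expansive'' hypothesis on the fiber $N$ with the weaker condition $\mathscr{Y}(N)\leq 0$, using Perelman's geometrization to reduce to a case that behaves like the expansive situation. The overall skeleton of the argument is identical to the proof of Theorem \ref{taurus}: assume $M=X\#P$ carries a metric $g$ with $s_g>0$, form $f=\pip\circ\bd:M\to S^1$, represent the Poincar\'e dual of $[f]$ by a mass-minimizing current $\mathscr{Z}=\sum n_j Z_j$, pick out a component $Z=Z_j$ meeting the reference circle with nonzero multiplicity $\mathfrak{n}$, and conclude that the first-factor projection gives a degree-$\mathfrak{n}$ map $\psi:=\proj\circ\widehat\bd\circ\widetilde{\incl}:Z\to N$. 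By the second-variation argument, the induced metric $h$ on $Z$ is conformal to a metric of positive scalar curvature; since $Z$ is an orientable surface, Gauss--Bonnet forces $Z\cong S^2$. So everything reduces to the purely $3$-dimensional assertion: \emph{a closed oriented $3$-manifold $N$ that receives a nonzero-degree map from $S^2$ must have $\mathscr{Y}(N)>0$}. (One then contrapositively gets $\mathscr{Y}(N)\leq 0\Rightarrow\mathscr{Y}(M)\leq 0$, exactly as stated.)

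First I would dispose of the map-from-$S^2$ reduction. A nonzero-degree map $S^2\to N$ forces $\pi_2(N)$ to be nonzero (the image of the fundamental class is a nonzero element, since it pairs nontrivially with $H^2(N;\mathbb{Q})$, which is $H_1(N;\mathbb{Q})$ by duality — but more robustly, $H_2(N;\mathbb{Q})\neq 0$ unless $N$ is a rational homology sphere). So either $N$ is a rational homology $3$-sphere, or $H_2(N;\mathbb{Q})\neq 0$ and the Hurewicz/sphere theorem gives an essential embedded $2$-sphere. In the latter case $N$ is a nontrivial connected sum (or has an $S^2\times S^1$ summand), and in all of these situations one can quote the known computation of the Yamabe invariant of $3$-manifolds. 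The cleanest route is: by geometrization together with the work of Schoen--Yau and Gromov--Lawson on which $3$-manifolds admit positive scalar curvature, a closed oriented $3$-manifold has $\mathscr{Y}(N)>0$ precisely when its prime decomposition consists only of spherical space forms and copies of $S^2\times S^1$; and the Yamabe invariant is additive under connected sum in the relevant sense (Kobayashi, and the surgery formula of Gromov--Lawson--Schoen--Yau) so that $\mathscr{Y}(N)>0$ is equivalent to $N$ carrying a positive-scalar-curvature metric. Hence it suffices to show that our $N$ — which receives a degree-$\mathfrak{n}$ map from $S^2$ — admits a positive-scalar-curvature metric.

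The key step, then, is: \textbf{if a closed oriented $3$-manifold $N$ admits a nonzero-degree map from $S^2$, then $N$ admits a metric of positive scalar curvature}. I would prove this by a minimal-surface/geometrization dichotomy. Write the Kneser--Milnor prime decomposition $N = N_1\#\cdots\#N_r$. A nonzero-degree map $f:S^2\to N$ composed with the pinch map $N\to N_i$ is again nonzero-degree onto each prime factor $N_i$, so it suffices to treat $N$ prime. If $N$ is prime and $\pi_2(N)\neq 0$, then by the sphere theorem $N\cong S^2\times S^1$ (or the twisted bundle, which is nonorientable, hence excluded), which carries an obvious $s>0$ metric. If $N$ is prime and aspherical, I claim it receives no nonzero-degree map from $S^2$: such a map would lift to the universal cover, which is contractible and noncompact, so the lift — a map from the compact $S^2$ — would be null-homotopic and thus the original map is null-homotopic, contradicting $\deg\neq 0$. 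The only remaining prime possibilities, by geometrization, are the spherical space forms $S^3/\Gamma$, which all carry round metrics of $s>0$. In every surviving case $N$ admits $s>0$; this completes the proof. The main obstacle is making the ``$\mathscr{Y}(N)>0 \iff N$ admits $s>0$'' equivalence airtight for $3$-manifolds — this is where Perelman's geometrization enters, via the identification (Kobayashi, Anderson) of $\mathscr{Y}(N)$ with an explicit $\sigma$-invariant and the positive-mass/surgery stability of the PSC condition — so I would cite \cite{bbbpm,lott} for geometrization and the standard references for the $3$-dimensional Yamabe-invariant computation, rather than reproving them here.
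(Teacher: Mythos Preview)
Your argument has a dimensional error that invalidates the second half of the proposal. The mass-minimizing hypersurface $Z = Z_j$ lives inside the \emph{four}-manifold $M$, so $Z$ is a closed oriented \emph{three}-manifold, not a surface. The second-variation argument (the $\ell = 3$ case discussed just before Definition~\ref{expanse}) shows only that the induced metric on $Z$ is conformal to one of positive scalar curvature; Gauss--Bonnet does not apply, and there is no reason for $Z$ to be $S^2$. The map $\proj \circ \widehat{\bd} \circ \widetilde{\incl} : Z \to N$ is therefore a degree-$\mathfrak{n}$ map between closed $3$-manifolds, and your ``key step'' about nonzero-degree maps $S^2 \to N^3$ is not even well-posed, since mapping degree is only defined between equidimensional closed oriented manifolds.

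What is actually needed at this point is precisely Proposition~\ref{better}: if there is a map of nonzero degree from a closed oriented $3$-manifold $Z$ with $\mathscr{Y}(Z) > 0$ to $N$, then $\mathscr{Y}(N) > 0$. The paper proves this by contraposition. If $\mathscr{Y}(N) \leq 0$, geometrization produces an aspherical summand $V$ in the prime decomposition of $N$, hence a degree-one collapse $N \to V$; composing yields a nonzero-degree map $\Psi : Z \to V$. On the other side, geometrization writes the positive-scalar-curvature $Z$ as a connected sum of spherical space forms and copies of $S^2 \times S^1$, and one shows $\deg \Psi = 0$ by realizing the surgeries that cut $Z$ into these pieces as a cobordism $W$, extending $\Psi$ across the attached $3$-disks (using $\pi_2(V) = 0$), and then invoking $\pi_3(V) = 0$ to see that each resulting map $S^3/\Gamma_j \to V$ has degree zero. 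Note that even after correcting the dimension, your lifting-to-the-universal-cover argument for the aspherical case relies on the domain being simply connected, which fails for a general positive-scalar-curvature $3$-manifold $Z$; the cobordism argument is what replaces it.
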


To prove this, we  just need  to first demonstrate the following:  

\begin{prop}
\label{better} 
If $\phi : Z\to N$ is a map 
of non-zero degree between  compact oriented  connected   $3$-manifolds, then  $\mathscr{Y}(Z) >  0
\,\Longrightarrow\,  \mathscr{Y}(N) > 0$.
\end{prop}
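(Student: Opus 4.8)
\textbf{Proof proposal for Proposition \ref{better}.}

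The plan is to exploit the positivity of the Yamabe invariant in dimension three via the known characterization in terms of positive-scalar-curvature metrics, together with the fact that a nonzero-degree map between closed oriented $3$-manifolds forces a strong relationship between their prime decompositions. First I would recall that for a closed oriented $3$-manifold $Z$, one has $\mathscr{Y}(Z) > 0$ if and only if $Z$ admits a metric of positive scalar curvature; by the resolution of geometrization and the earlier work on positive-scalar-curvature $3$-manifolds (Gromov--Lawson, Schoen--Yau, and the Perelman-based classification cited in the excerpt as \cite{bbbpm,lott}), this is in turn equivalent to $Z$ being a connected sum of spherical space forms $S^3/\Gamma$ and copies of $S^2 \times S^1$. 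So the statement to be proved becomes: if $Z$ is such a connected sum and $\phi : Z \to N$ has nonzero degree, then $N$ is also a connected sum of this type.

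The key step is a surgery/prime-decomposition argument controlling the target $N$. I would argue that each prime summand of $N$ must receive nonzero degree from some prime summand of $Z$ after suitable compression: a nonzero-degree map $\phi : Z \to N$ can be homotoped so that the preimage of a separating sphere realizing a prime decomposition of $N$ is a union of spheres in $Z$, and since $Z$ is a connected sum of spherical space forms and $S^2\times S^1$'s, these spheres compress, yielding nonzero-degree maps from prime pieces of $Z$ onto the prime pieces of $N$. Thus it suffices to treat the case where $N$ is prime. If $N$ is prime and receives a nonzero-degree map from a prime piece $P$ of $Z$, then $P$ is either $S^3/\Gamma$ or $S^2\times S^1$. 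A nonzero-degree map from $S^3$ to $N$ forces $\pi_1(N)$ finite and then, by geometrization, $N$ is a spherical space form. A nonzero-degree map from $S^2\times S^1$ to $N$ (or from $S^3/\Gamma$ to $N$) forces $\pi_1(N)$ to be a quotient-up-to-finite-index of $\pi_1(P)$, hence either finite or virtually $\ZZ$; in the prime aspherical case this is impossible since a closed aspherical $3$-manifold has infinite $\pi_1$ of cohomological dimension $3$, which cannot be virtually cyclic — equivalently, such an $N$ would be expansive in the sense of Definition \ref{expanse} (or finitely covered by something expansive), and Lemma \ref{expansive} shows $N$ then admits no positive-scalar-curvature metric, so we need $N$ non-aspherical and prime, forcing $N = S^3/\Gamma$ or $S^2\times S^1$. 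In every case $N$ decomposes into spherical space forms and copies of $S^2\times S^1$, hence carries a positive-scalar-curvature metric, so $\mathscr{Y}(N) > 0$.

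Alternatively, and perhaps more cleanly, I would bypass the explicit prime decomposition of $N$ and instead use the contrapositive directly through the expansive machinery: if $\mathscr{Y}(N) \le 0$, then by geometrization $N$ has a nontrivial aspherical piece in its decomposition, which one shows makes $N$ admit a nonzero-degree map to an aspherical $3$-manifold with $b_1 \neq 0$ after passing to a finite cover — i.e.\ $N$ is ``virtually expansive'' — and pulling back along $\phi$ shows $Z$ is virtually expansive too, whence $\mathscr{Y}(Z) \le 0$ by Lemma \ref{expansive} applied to the finite cover. The main obstacle I anticipate is handling the case where the decomposition of $N$ contains aspherical pieces that are not themselves of the form needed for Lemma \ref{expansive} to apply verbatim (e.g.\ Seifert-fibered pieces over bad orbifolds, or pieces with $b_1 = 0$ such as certain hyperbolic or Nil/PSL-type manifolds): one must pass to a finite cover to arrange $b_1 \neq 0$ and asphericity simultaneously, using that closed aspherical $3$-manifolds other than those covered by $S^2 \times S^1$ have residually finite $\pi_1$ with subgroups of positive first Betti number — this covering step, and checking that nonzero degree is preserved under the relevant covers, is the delicate point.
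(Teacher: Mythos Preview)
Your contrapositive instinct is right, and the paper follows it too: assume $\mathscr{Y}(N)\le 0$, so by geometrization $N=N_0\# V$ with $V$ a closed aspherical $3$-manifold, and collapse to get a degree-one map $\psi:N\to V$; then $\Psi=\psi\circ\phi:Z\to V$ has nonzero degree. The divergence is in what happens next. You want to invoke Lemma~\ref{expansive}, which needs $b_1(V)\neq 0$, and you correctly flag that arranging this in general requires passing to finite covers via deep results (virtual positive $b_1$, Agol--Wise). That is a genuine obstacle in your approach, and you do not actually resolve it; moreover, the compatibility of ``nonzero degree'' with the finite covers on both sides needs care you have not supplied. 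Your first approach has a separate gap: the assertion that one can compress so as to obtain nonzero-degree maps from \emph{prime pieces of $Z$} to prime pieces of $N$ is not justified, and is not obviously true.

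The paper sidesteps all of this with an elementary cobordism argument that uses only $\pi_2(V)=\pi_3(V)=0$ and never needs $b_1(V)\neq 0$. Since $\mathscr{Y}(Z)>0$, geometrization gives $Z\approx (S^3/\Gamma_1)\#\cdots\#(S^3/\Gamma_k)\#\ell(S^2\times S^1)$; performing $2$-surgeries on the connect-sum spheres yields a cobordism $W$ from $Z$ to $\coprod_j S^3/\Gamma_j$ built by attaching $3$-cells along $2$-spheres. Because $\pi_2(V)=0$, the map $\Psi:Z\to V$ extends over $W$ to $\widehat{\Psi}:W\to V$, whence $\deg(\Psi)=\sum_j\deg(\Psi_j)$ where $\Psi_j=\widehat{\Psi}|_{S^3/\Gamma_j}$. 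But $\pi_3(V)=0$ forces every map $S^3\to V$ to have degree zero, and by multiplicativity each $\Psi_j$ has degree zero; hence $\deg(\Psi)=0$, a contradiction. This is the missing idea: rather than reducing to Lemma~\ref{expansive}, use the full asphericity of $V$ (both $\pi_2$ and $\pi_3$ vanish) via a direct extension-over-cobordism argument.
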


\begin{proof}
Perelman's  Ricci-flow proof of Thurston's 
uniformization conjecture implies \cite{bbbpm,lott} that a closed oriented $3$-manifold $Z$ admits
a metric of positive scalar curvature iff it is a connected sum of spherical spaces forms $S^3/\Gamma_j$
and/or copies of $S^2 \times S^1$. Geometrization   then goes on to tell  us that 
  a closed oriented $3$-manifold $N$ that does not admit metrics
of positive scalar curvature must therefore be expressible as a connected sum $N= N_0\# V$, where $V$ is a $K(\pi , 1)$
and $N_0$ is some other  $3$-manifold. Consequently, if $\mathscr{Y}(N) \leq 0$, there is  a degree-one map 
$\psi: N\to V$ to some compact oriented aspherical $3$-manifold  $V$ obtained by collapsing $N_0-B^3$ to a point. 

Arguing by contradiction, we now assume  that there exists some  map 
$\phi : Z\to N$ of non-zero degree, where $\mathscr{Y}(Z) > 0$ but  $\mathscr{Y}(N) \leq 0$.
Letting $\psi: N\to V$ be a degree-one map to an aspherical manifold, we thus obtain 
 a map $\Psi:= \psi \circ \phi  : Z \to V$ of non-zero degree, where $V$ is a  $K(\pi , 1)$ and 
$$
Z \approx (S^3/\Gamma_1)  \# \cdots \# (S^3/\Gamma_k) \# \underbrace{(S^2\times S^1) \# \cdots \#(S^2\times S^1)}_\ell 
$$
for some $\ell \geq 0$. The latter may be restated as saying that  $\coprod_{j=1}^k (S^3 /\Gamma_j)$ can be  obtained from $Z$ by performing surgeries  in dimension $2$. Since every surgery can be 
realized by a cobordism, this means that there is a compact oriented $4$-manifold-with-boundary $W$ with 
$\partial W = \overline{Z}\sqcup \coprod_{j=1}^k (S^3/\Gamma_j)$, and such that  $W$ deform-retracts to a space
obtained from ${Z}$ by attaching $3$-disks $D^3$ along their boundaries at  $\max (\ell, k+\ell-1)$ disjoint embedded $2$-spheres in  $Z$. 
Since $\pi_2(V) =0$, we can therefore extend $\Psi$ across these $3$-disks, and thereby obtain a 
 map $\widehat{\Psi} :  W\to V$ with  $\widehat{\Psi}|_{\overline{Z}} = \Psi$. If we  define $\Psi_j: S^3 /\Gamma_j \to V$  to be  
the restriction of 
 $\widehat{\Psi}$ to the corresponding  component of $\partial W$, 
we thus have 
$$-\Psi_* [Z] + \sum_{j=1}^k \Psi_{j*} [S^3 /\Gamma_j]= \widehat{\Psi}_* [\partial W]= 0 \in H_3 (V, \ZZ),$$
and pairing with the generator of $H^3(V,\ZZ)$ therefore yields 
 \begin{equation}
 \label{hummer}
\deg (\Psi)  = \sum_{j=1}^k \deg (\Psi_j ).
\end{equation}
 However, because $\pi_3 (V)=0$, any 
 map $S^3 \to V$ is homotopic to a constant, and therefore  has degree $0$; and 
since mapping-degree is multiplicative under compositions, this in turn  implies  that every  map $S^3/\Gamma_j \to V$ also has 
  degree zero. Thus   $\deg (\Psi_j)=0$ for all $j$, and  hence $\deg (\Psi )=0$ by 
 \eqref{hummer}. Since this contradicts the fact that    $\deg (\Psi )\neq 0$ by construction, 
our assumption was therefore false. Hence  $\mathscr{Y}(Z) >  0
\,\Longrightarrow\,  \mathscr{Y}(N) > 0$ whenever there is a map $\phi: Z\to N$ of non-zero degree. 
   \end{proof} 

The proof of Theorem \ref{sharp} is now identical to the proof of Theorem \ref{taurus},
except that,  in the very last paragraph, 
 Proposition \ref{better} now supplants Lemmas \ref{uno} and \ref{expansive}.
 Our proof of  Lemma \ref{crux} can then be rephrased in terms of 
  Theorem \ref{sharp} 
 by simply observing   that  our proof of Proposition  \ref{better} also proves the following: 
  
  \begin{prop}
  \label{clearer} 
  Let $N$ be a compact oriented $3$-manifold that admits a map $\phi : N \to V$ of non-zero degree to an aspherical manifold. 
  Then $\mathscr{Y} (N) \leq 0$. 
  \end{prop}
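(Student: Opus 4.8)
The plan is to extract the relevant content from the proof of Proposition \ref{better} and observe that it applies verbatim to this slightly more general situation. Recall that Proposition \ref{better} was proved by contradiction: assuming $\mathscr{Y}(Z) > 0$ while $\mathscr{Y}(N) \leq 0$, one composes the hypothetical degree-one map $\psi : N \to V$ to an aspherical manifold with the given map $\phi$ to get a nonzero-degree map $Z \to V$, and then derives a contradiction from the structure of $Z$ as a connected sum of spherical space forms and copies of $S^2 \times S^1$. The point is that the only place the hypothesis ``$\mathscr{Y}(N) \leq 0$'' was used in that argument was precisely to \emph{produce} the aspherical $V$ together with a degree-one map $N \to V$; but in Proposition \ref{clearer} we are \emph{handed} such a $V$ and such a map (or rather a nonzero-degree map, which is all that is actually needed) as a hypothesis.

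So concretely, I would argue by contradiction: suppose $N$ admits a map $\phi : N \to V$ of nonzero degree to a compact oriented aspherical $3$-manifold $V$, yet $\mathscr{Y}(N) > 0$. By Perelman's resolution of geometrization, $\mathscr{Y}(N) > 0$ forces $N$ to be a connected sum $N \approx (S^3/\Gamma_1) \# \cdots \# (S^3/\Gamma_k) \# \ell(S^2 \times S^1)$. Now run the cobordism argument from the proof of Proposition \ref{better} with $\Psi := \phi$ in place of $\psi \circ \phi$: there is a compact oriented $4$-manifold-with-boundary $W$ with $\partial W = \overline{N} \sqcup \coprod_j (S^3/\Gamma_j)$ that deformation-retracts onto $N$ with $3$-disks attached along embedded $2$-spheres, and since $\pi_2(V) = 0$ the map $\phi$ extends over these disks to $\widehat{\Psi} : W \to V$. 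Pairing $\widehat{\Psi}_*[\partial W] = 0$ with the generator of $H^3(V,\ZZ)$ gives $\deg(\phi) = \sum_j \deg(\Psi_j)$ where $\Psi_j : S^3/\Gamma_j \to V$; but $\pi_3(V) = 0$ (since $V$ is aspherical and not a point — or more simply since $\widetilde{V}$ is contractible, $\pi_3(V) = \pi_3(\widetilde{V}) = 0$) forces every map $S^3 \to V$, hence every map $S^3/\Gamma_j \to V$, to have degree zero. Thus $\deg(\phi) = 0$, contradicting the hypothesis.

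There is essentially no obstacle here beyond bookkeeping: the entire substantive content — the use of geometrization, the surgery/cobordism construction, the extension over $3$-disks, and the vanishing of degrees of maps out of spherical space forms into aspherical targets — is already carried out in the proof of Proposition \ref{better}. The only genuinely new remark is the trivial one that the conclusion $\mathscr{Y}(N) \leq 0$ of Proposition \ref{better} really was deduced \emph{from} the existence of a nonzero-degree map to an aspherical manifold together with $\mathscr{Y}(Z) > 0$, and that the roles of ``$Z$'' and the structural hypothesis can be recombined. Accordingly I would simply write: ``This is exactly what the proof of Proposition \ref{better} shows, once one takes $\phi : N \to V$ itself to play the role of the composite $\Psi = \psi \circ \phi$ there,'' and then indicate the one-line verification above for the reader's convenience. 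If a self-contained statement is wanted, one can reproduce the short paragraph; if not, a pointer to the prior proof suffices.
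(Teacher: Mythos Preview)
Your proposal is correct and matches the paper exactly: the paper gives no separate proof of Proposition \ref{clearer}, instead simply remarking that ``our proof of Proposition \ref{better} also proves the following'' before stating it. Your observation that one takes $\Psi := \phi$ in place of $\psi \circ \phi$ and reruns the cobordism argument verbatim is precisely the intended reading.
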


   This   sharpens Lemma \ref{expansive} by dropping the requirement  that $b_1(V)\neq 0$. However, since 
   recent  results of Agol, Wise, and others \cite{agol-icm} show  that  compact  aspherical  $3$-manifolds  always 
       have finite-sheeted 
   covers with $b_1\neq 0$,  the difference here is arguably more a matter of {\ae}sthetics than of substance. 
In any case,    the pioneering papers of  Schoen-Yau \cite{sy3man} and  Gromov-Lawson  \cite{gvln2}, 
which proved  strikingly similar   results  by   entirely 
   different methods, provide attractive alternatives for  addressing the $3$-dimensional aspects of our story.    
   
   Because the results of this section have been  formulated in much greater generality than what we'd     need to  just
 prove Lemma \ref{crux}, they of course  have many other applications.  For example, we will later see, in \S \ref{pathology} 
 below, that they  also imply  interesting results concerning  complex surfaces with 
  $b_1$ odd and  $\kod =-\infty$. For related results, 
  including a non-Seiberg-Witten   proof   of the  $b_1$ odd, $\kod =0$ case of Theorems 
  \ref{ellipsis} and \ref{parity}, see   \cite[Theorem 4.8]{alba}.

\pagebreak

\section{Proof via the Seiberg-Witten Equations} 
\label{monopoly} 

In this section, we give an entirely  different  proof of Lemma \ref{crux} based on 
an indirect use of the Seiberg-Witten equations pioneered  by Kronheimer \cite{K}.

Let $M$ be a $4$-manifold with a fixed spin$^c$ structure $\mathfrak{c}$. For any metric $g$ on $M$, 
this gives us two  rank-$2$ complex vector bundles $\mathbb{V}_\pm \to M$,  along with formal isomorphisms
$$\mathbb{V}_\pm= \mathbb{S}_\pm\otimes L^{1/2}$$
where $\mathbb{S}_\pm$ are the (locally-defined) 
spin bundles of $(M,g)$, and where  $$L =  \det (\mathbb{V}_+) =   \det (\mathbb{V}_-) $$
is called the  determinant line bundle of the of the spin$^c$ structure. Since   
$$c_1(L) \equiv w_2 (M) \bmod 2,$$
 the square-root $L^{1/2}$, like the vector bundles $\mathbb{S}_\pm$, is not   globally defined if $w_2 (M)\neq 0$; however, 
the obstruction to  consistently choosing  the signs of the   transition functions for $L^{1/2}$  exactly cancels the 
obstruction to consistently choosing   signs for the 
transition functions of  $\mathbb{S}_\pm$. 

The {\em Seiberg-Witten equations} are  equations for a pair $(\Phi, \theta)$, where $\Phi$ is a section of $\mathbb{V}_+$ and 
$\theta$ is a $\mathbf{U}(1)$-connection on $L$. They  are given by 
 \begin{eqnarray} \dir_\theta\Phi &=&0\label{drc}\\
 F_\theta^+&=&-\frac{1}{2} \Phi \odot \overline{\Phi} \label{sd}\end{eqnarray}
 where $\dir_\theta : \Gamma (\mathbb{V}_+)\to \Gamma (\mathbb{V}_-)$ is the Dirac operator
 coupled to $\theta$, and $F_\theta^+$ is the self-dual part of the curvature of $\theta$, 
while  $\odot$ denotes the symmetric tensor product,  and  we have
 used the identification $\Lambda^+\otimes \CC = \odot^2 \mathbb{S}_+$ induced by Clifford multiplication. 
 The Seiberg-Witten equations  (\ref{drc}--\ref{sd})  then imply   the  Weitzenb\"ock formula 
  \begin{equation}
 0=	2\Delta |\Phi|^2 + 4|\nabla_{\theta}\Phi|^2 +s|\Phi|^2 + |\Phi|^4 ,	
 	\label{wnbk}
 \end{equation}
which  leads to striking results relating    curvature  and  differential topology.

On  complex surfaces of  K\"ahler type, Seiberg-Witten theory plays a  crucial role in the 
calculation of  Yamabe invariants. If  a $4$-manifold $M$  with  $b_+ > 1$  admits an 
almost-complex structure,   
Witten \cite{witten} defined  an invariant that counts the expected  number of gauge-equivalence classes of solutions of the equations
 with multiplicities, 
relative to the spin$^c$ structure arising from an almost-complex structure. When this count is non-zero,
the first Chern  class $c_1(L)$ of the spin$^c$ structure is
 then called a {\em basic class}. One of Witten's most revolutionary   discoveries was that 
if $(M,J)$ is a compact K\"ahler  surface with $b_+ > 1$, then $c_1(M,J)$ is a basic class. 
This in particular implies that no such $M$  can admit metrics of positive scalar curvature. 

However, Seiberg-Witten basic classes do not provide a workable method for  proving Lemma \ref{crux}.
Indeed,  a theorem of  Biquard \cite[Th\'eor\`eme 8.1]{biq2} shows, for example,  
 that  $S^1 \times N$ 
carries no basic classes
if  $N\to \Sigma$ is a non-trivial circle bundle over a hyperbolic Riemann surface. 
It might therefore be tempting to think that the Seiberg-Witten 
equations have no bearing  at all on properly elliptic surfaces  of non-K\"ahler type.

However, there are \cite{baufu,ozsz} still  many $4$-manifolds without basic classes where, for   specific spin$^c$ structures,  
the Seiberg-Witten 
equations  nonetheless admit solutions for every Riemannian metric. 
Such a  state of affairs is usually described  by using  the following 
convenient definition, which was  first introduced by Peter Kronheimer \cite[Definition 5]{K}:
  
\begin{defn}
Let $M$ be a smooth compact oriented $4$-manifold
with $b_{+}\geq 2$. An element $\mathbf{a}\in  H^{2}(M,\ZZ )/
\mbox{\rm torsion}$,  is  called a {\sf monopole
class} of $M$ iff there is some  spin$^{c}$ structure
$\mathfrak{c}$ 
on $M$ with first Chern class 
$$c_{1}(L)\equiv \mathbf{a} ~~~\bmod \mbox{\rm torsion}$$ for which   the   Seiberg-Witten 
equations (\ref{drc}--\ref{sd})
have a solution for every Riemannian  metric $g$ on $M$. 
\end{defn}

Unfortunately, however, the   expositional prominence accorded to this idea  appears to have resulted in  
 widespread  misunderstanding   of  Kronheimer's paper \cite{K},
which   ultimately   never   claimed  to prove the existence of 
monopole classes on the $4$-manifolds it studied.  We will  therefore  need to  carefully
discuss  Kronheimer's argument    in order to    highlight   what it 
actually proves.

Let us now    recall that an integral cohomology class $\mathbf{a}\in  H^{2}(M,\ZZ )/ \mbox{\rm torsion}$
is said to be {\em characteristic} if 
$$\mathbf{a}\bullet \mathbf{b} \equiv      \mathbf{b}\bullet \mathbf{b}\bmod 2 ,
\qquad \forall\mathbf{b}\in H^{2}(M,\ZZ )/\mbox{\rm torsion},$$ 
where $\bullet$ denotes  the  intersection pairing.  For any spin$^c$ structure, 
the image  $c_1^\RR (L)\in H^{2}(M,\ZZ )/ \mbox{\rm torsion}\subset H^2(M, \RR)$ of $c_1(L)$ is a characteristic element, and 
conversely,   every characteristic element  arises in this way from some spin$^c$ structure on $M$. 

 When $\mathbf{a}$ is a monopole class, applying the Weitzenb\"ock formula \eqref{wnbk} to  solutions of the Seiberg-Witten equations
 implies a scalar-curvature integral  estimate involving $\mathbf{a}$ that must hold for every metric on the manifold. 
 Without even mentioning the Seiberg-Witten equations, however, it is  possible
  to simply axiomatize the property   that 
 this estimate holds  for every possible metric. Doing so then leads to  the following definition:

\begin{defn}
\label{effective}
Let $M$ be a smooth compact oriented $4$-manifold
with $b_{+}\geq 2$.  A characteristic integral  cohomology class  
$\mathbf{a}\in  H^{2}(M,\ZZ )/ \mbox{\rm torsion}\subset H^2 (M,\RR)$  will be called 
a {\sf mock-monopole class}  of $M$ if  every Riemannian metric $g$ on $M$
satisfies  the inequality 
\begin{equation}
\label{virtue}
\int_M s_-^2 ~ d\mu_g \geq 32\pi^2 [\mathbf{a}^+]^2,
\end{equation}
where $s_-(x):= \min (s_g(x), 0)~\forall x\in M$, and where  $\mathbf{a}^+\in H^2(M, \RR)$  is 
the orthogonal projection  of  $\mathbf{a}$, with respect to the intersection form $\bullet$,  to the 
$b_+(M)$-dimensional subspace $\mathcal{H}^+_g\subset H^2 (M,\RR)$
consisting of  those deRham classes that  are represented by 
  self-dual  harmonic
$2$-forms with respect to $g$. 
\end{defn}

We will see in a moment  that every monopole class is also a mock-monopole class. However, 
 Kronheimer's paper \cite{K} develops a  less direct way of using  the Seiberg-Witten equations 
 to show that certain $4$-manifolds carry mock-monopole classes. Before discussing this further, however, 
 let us first observe that Definition \ref{effective} has some pertinent 
 immediate consequences.

\begin{prop}
\label{thumper} 
Let $M$ be a smooth compact oriented $4$-manifold
with $b_{+}\geq 2$. If $M$ carries a {\sf non-zero} mock-monopole class, then $\mathscr{Y}(M)\leq 0$. 
\end{prop}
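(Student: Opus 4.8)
The plan is to derive a contradiction from the assumption that $M$ admits a metric of positive scalar curvature, using the defining inequality \eqref{virtue} for a mock-monopole class. First I would observe that if $\mathscr{Y}(M) > 0$, then by definition of the Yamabe invariant $M$ carries a Riemannian metric $g$ with $s_g > 0$ everywhere; in fact, after a conformal change one can take $s_g$ to be a positive constant, but all we really need is $s_g \geq 0$ pointwise, so that $s_-(x) := \min(s_g(x), 0) \equiv 0$. Plugging this metric into \eqref{virtue} then forces $0 = \int_M s_-^2\, d\mu_g \geq 32\pi^2 [\mathbf{a}^+]^2$, so that $[\mathbf{a}^+]^2 \leq 0$ for this particular metric $g$.

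Next I would analyze the sign of $[\mathbf{a}^+]^2$. The subspace $\mathcal{H}^+_g \subset H^2(M,\RR)$ on which the intersection form $\bullet$ is positive-definite is by hypothesis $b_+(M)$-dimensional with $b_+ \geq 2 > 0$, and $\mathbf{a}^+$ is the orthogonal projection of $\mathbf{a}$ onto $\mathcal{H}^+_g$. Since $\bullet$ restricted to $\mathcal{H}^+_g$ is positive-definite, we have $[\mathbf{a}^+]^2 = \mathbf{a}^+ \bullet \mathbf{a}^+ \geq 0$, with equality if and only if $\mathbf{a}^+ = 0$. Combining this with the inequality from the previous step, we conclude $\mathbf{a}^+ = 0$ for the metric $g$; that is, $\mathbf{a}$ is orthogonal to every self-dual harmonic $2$-form of $g$, equivalently $\mathbf{a}$ lies in the maximal negative-semidefinite subspace $\mathcal{H}^-_g \oplus (\text{null part})$ determined by $g$.

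The crux is then to rule this out using the hypothesis that $\mathbf{a}$ is \emph{non-zero} in $H^2(M,\ZZ)/\text{torsion}$. Here I would invoke the standard fact about the period map: as $g$ ranges over all Riemannian metrics on $M$, the positive subspaces $\mathcal{H}^+_g$ sweep out enough of the Grassmannian that no fixed non-zero class in $H^2(M,\RR)$ can be orthogonal to $\mathcal{H}^+_g$ for \emph{every} $g$ — indeed, for any non-zero $\mathbf{a}$ there is a metric $g$ for which $\mathbf{a}^+ \neq 0$. But actually the argument is cleaner than that: we do not need all metrics, only the specific positive-scalar-curvature metric $g$ we already have, together with the freedom to rescale or perturb it. The key point is that $\mathbf{a}^+ = 0$ together with $\mathbf{a} \neq 0$ forces $\mathbf{a}^2 = \mathbf{a} \bullet \mathbf{a} = [\mathbf{a}^-]^2 < 0$ is impossible to have happen for all metrics; more precisely, since $\mathbf{a}^+$ depends continuously on $g$ and can be made nonzero by a suitable choice, while \eqref{virtue} must hold for \emph{all} metrics, we get that $\mathbf{a}$ would have to satisfy $\mathbf{a}^+ = 0$ simultaneously for every metric $g$ — and a non-zero characteristic class cannot do this. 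I expect this last step — verifying that a non-zero class cannot be perpetually in the negative cone — to be the only substantive point, and it follows from the surjectivity/openness properties of the period map on the space of metrics (or, concretely, by a two-dimensional deformation argument within a single conformal-to-product family). Thus the assumption $\mathscr{Y}(M) > 0$ is untenable, and $\mathscr{Y}(M) \leq 0$ as claimed.
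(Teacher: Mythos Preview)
Your first two paragraphs are fine and match the paper's reasoning: a metric with $s_g \geq 0$ forces $[\mathbf{a}^+_g]^2 \leq 0$ via \eqref{virtue}, and positive-definiteness of $\bullet$ on $\mathcal{H}^+_g$ then gives $\mathbf{a}^+_g = 0$.

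The gap is in your third paragraph. You have shown $\mathbf{a}^+_g = 0$ for \emph{one} specific positive-scalar-curvature metric $g$, and there is nothing contradictory about that: a non-zero class can certainly lie in $\mathcal{H}^-_g$ for some particular $g$. Your assertion that ``$\mathbf{a}$ would have to satisfy $\mathbf{a}^+ = 0$ simultaneously for every metric $g$'' does not follow from anything you have established; you never applied \eqref{virtue} to any metric other than $g$ itself. The remark about $\mathbf{a}^2 = [\mathbf{a}^-]^2 < 0$ is also a dead end, since $\mathbf{a}^2$ is metric-independent and its sign imposes no constraint on whether $\mathbf{a}^+$ vanishes for a given metric.

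What actually closes the argument is one of two clean moves. The direct route (which you gesture at with ``freedom to perturb'') is: positive scalar curvature is a $C^2$-open condition, while the period-map transversality result you cite says $\{g : \mathbf{a}^+_g \neq 0\}$ is $C^2$-dense; hence there is a metric $g'$ that simultaneously has $s_{g'} > 0$ \emph{and} $\mathbf{a}^+_{g'} \neq 0$, and then \eqref{virtue} for $g'$ gives $0 \geq 32\pi^2[\mathbf{a}^+_{g'}]^2 > 0$. The paper instead takes a slightly different route: it observes that $\mathcal{H}^+_g$ is conformally invariant, so for every conformal class $[g]$ with $\mathbf{a}^+_{[g]} \neq 0$ the Yamabe constant satisfies $Y(M,[g]) < 0$ (otherwise a Yamabe minimizer would have $s_- \equiv 0$); then density of such classes together with $C^2$-continuity of $g \mapsto Y(M,[g])$ yields $Y(M,[g]) \leq 0$ for all $[g]$, hence $\mathscr{Y}(M) \leq 0$. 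Either route works, but you need to actually carry one of them out rather than conflating them.
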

\begin{proof}
Since the harmonicity of  $2$-forms on $M^4$ is unaffected   by conformal changes of metric $g\rightsquigarrow \widehat{g}= u^2g$,
and since the self-dual/anti-self-dual decomposition $\Lambda^2 = \Lambda^+_g \oplus \Lambda^-_g$ of $2$-forms 
 is also conformally invariant, the 
self-dual subspace $\mathcal{H}^+_g=  \mathcal{H}^+_{[g]}$ of $H^2(M, \RR)$ only depends on the conformal class $[g]$ of the 
metric $g$. Since the intersection form $\bullet$ is positive-definite on  $\mathcal{H}^+_{[g]}$ for any 
conformal class $[g]$, we therefore have $[\mathbf{a}^+]^2 > 0$ whenever $\mathbf{a}^+= (\mathbf{a}^+)_g\neq 0$. Thus, 
if a conformal class $[g]$ satisfies $(\mathbf{a}^+)_g\neq 0$, its Yamabe constant
$$Y(M, [g]) = \inf_{\widehat{g}\in [g]} \frac{\int_M \widehat{s}~\widehat{d\mu}}{\sqrt{\int_M \widehat{d\mu}}}$$
must  satisfy $Y(M, [g]) <0$, because  otherwise $[g]$ would contain a metric $\widehat{g}$ with $\widehat{s}\geq 0$, and hence  with 
$\widehat{s}_-\equiv 0$, 
and this would then  violate the key    inequality \eqref{virtue} that must be satisfied by any mock-monopole class. 

On the other hand, because  we  have $\mathbf{a}\neq 0$ by hypothesis, the set of metrics $\{ g ~|~ (\mathbf{a}^+)_g\neq 0\}$ 
is {\em dense} in the $C^2$ topology, as a consequence of  the fact \cite[Proposition 4.3.14]{don} that  the period map 
\begin{eqnarray*}
\{ C^2 \mbox{ metrics}\} &\longrightarrow& Gr_{b_+}^+ (H^2(M,\RR))\\
g \qquad&\longmapsto & \qquad \mathcal{H}^+_g
\end{eqnarray*}
 is everywhere  transverse   to the set of positive planes orthogonal to $\mathbf{a}$.  
 But  the Yamabe constant $Y(M, [g])$ is a continuous   function of 
$g$ in the $C^2$ topology  \cite[Proposition 4.31]{bes}, so
 taking limits then shows  that $Y(M, [g])\leq 0$ for every metric $g$. Consequently,
 $\mathscr{Y} (M) = \sup_g Y(M, [g])\leq 0$, as claimed. 
\end{proof}

\begin{cor} 
\label{bumper} 
Let $X$ be a smooth compact oriented $4$-manifold with $b_+\geq 2$, and let 
$M = X\# k \overline{\CP}_2$ for some $k \geq 1$. If $M$ admits a mock-monopole class,
then neither $M$ nor $X$ can admit metrics of positive scalar curvature. 
\end{cor}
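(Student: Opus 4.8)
The plan is to derive Corollary \ref{bumper} from Proposition \ref{thumper} by showing that the hypothesized mock-monopole class on $M = X\# k\overline{\CP}_2$ is automatically non-zero, and that its existence obstructs positive scalar curvature not only on $M$ but also on the blow-down $X$. First I would handle $M$ itself: since $M$ is obtained by blowing up, each $\overline{\CP}_2$ summand contributes an exceptional class $e_j \in H^2(M,\ZZ)$ with $e_j \bullet e_j = -1$, and a characteristic class $\mathbf{a}$ must satisfy $\mathbf{a}\bullet e_j \equiv e_j \bullet e_j \equiv 1 \bmod 2$, so in particular $\mathbf{a}\bullet e_j$ is odd and hence non-zero. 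This forces $\mathbf{a}\neq 0$, so Proposition \ref{thumper} immediately gives $\mathscr{Y}(M)\leq 0$, and in particular $M$ admits no positive-scalar-curvature metric.

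Next I would transfer the conclusion to $X$. The standard move here is that a positive-scalar-curvature metric on $X$ would yield one on $M = X \# k\overline{\CP}_2$: by the Gromov--Lawson / Schoen--Yau surgery theorem, positive scalar curvature is preserved under connected sum (in dimension $4$, connected-summing with any closed manifold that itself admits a PSC metric, and $\overline{\CP}_2 \cong \CP_2$ carries the Fubini--Study metric of positive scalar curvature). Thus if $X$ admitted a PSC metric, so would $M$, contradicting what we just established. Alternatively, and perhaps more in keeping with the paper's self-contained spirit, one can argue directly: a mock-monopole class $\mathbf{a}$ on $M$ restricts the scalar curvature via \eqref{virtue}, and by choosing metrics on $M$ that are products near the necks one pushes the estimate down to $X$; but the cleaner route is the surgery theorem, which I would cite.

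The main obstacle — really the only subtlety — is making sure the non-vanishing argument is airtight: one needs that the exceptional classes $e_j$ are genuinely non-torsion and span a summand on which the intersection form is the standard diagonal $\langle -1\rangle^{\oplus k}$, so that $\mathbf{a}\bullet e_j$ is well-defined on $H^2(M,\ZZ)/\text{torsion}$ and the characteristic condition applies verbatim. This is immediate from $M \approx X \# k\overline{\CP}_2$ and the behaviour of intersection forms under connected sum, but it is the step where the hypothesis $k\geq 1$ is used — without it there need be no odd pairing available and $\mathbf{a}$ could vanish. I would state this explicitly. Everything else is a direct appeal to Proposition \ref{thumper} and the Gromov--Lawson surgery construction, so the proof should be only a few lines.
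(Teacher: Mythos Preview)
Your proposal is correct and follows essentially the same route as the paper: the paper picks a single exceptional generator $\mathsf{E}$ with $\mathsf{E}\bullet\mathsf{E}=-1$, uses the characteristic condition to get $\mathbf{a}\bullet\mathsf{E}$ odd and hence $\mathbf{a}\neq 0$, applies Proposition~\ref{thumper} to rule out positive scalar curvature on $M$, and then invokes the Gromov--Lawson surgery lemma (noting that $\overline{\CP}_2$ carries a positive-scalar-curvature metric) to rule it out on $X$ as well. Your extra remarks about torsion-freeness of the exceptional classes and the role of $k\geq 1$ are accurate elaborations of points the paper leaves implicit.
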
 
\begin{proof} Since the  Mayer-Vietoris sequence gives us a canonical isomorphism 
 $H^2(X\# k \overline{\CP}_2) = H^2 (X) \oplus [H^2(\overline{\CP}_2)]^{\oplus k}$, 
 we may take $\mathsf{E}\in  H^2(M, \ZZ)/\mbox{torsion}$  to be 
a  generator 
of one $H^2 (\overline{\CP}_2, \ZZ)$ summand  in $H^2(M,\ZZ)/\mbox{torsion}$. 
We then have $\mathsf{E} \bullet \mathsf{E}=-1$. 
If $\mathbf{a}\in H^2(M, \ZZ)/\mbox{torsion}$ is a mock-monopole class, the fact that 
$\mathbf{a}$ is characteristic then implies that 
$$\mathbf{a} \bullet \mathsf{E} \equiv \mathsf{E} \bullet \mathsf{E} \equiv  1  \bmod 2.$$
It therefore follows that $\mathbf{a}\neq 0$, and  Proposition \ref{thumper} therefore tells us
that $M$ cannot admit metrics of positive scalar curvature.

However,  the Gromov-Lawson surgery lemma \cite{gvln} implies that the connected sum 
of two positive-scalar-curvature $4$-manifolds also admits metrics of positive scalar curvature.  Since $\overline{\CP}_2$ admits
metrics of positive scalar curvature, while $M = X\# k \overline{\CP}_2$
does not, it therefore follows that   $X$ {cannot} admit metrics of positive scalar curvature either. 
\end{proof}

Before we discuss Kronheimer's argument, let us first see why any monopole class is 
also a mock-monopole class. If  $M$ carries a monopole class $\mathbf{a}$, 
then there is a spin$^c$ structure $\mathfrak{c}$ with  $c_1^\RR (L)=\mathbf{a}$
and relative to which there is some  solution $(\Phi, \theta )$ of the Seiberg-Witten equations (\ref{drc}--\ref{sd})
for  each metric $g$. The Weitzenb\"ock formula \eqref{wnbk} therefore tells us that 
\begin{eqnarray*}
 0&=	&2\Delta |\Phi|^2 + 4|\nabla_{\theta}\Phi|^2 +s|\Phi|^2 + |\Phi|^4 \\
 &\geq& 	2\Delta |\Phi|^2 +(s_-) |\Phi|^2 + |\Phi|^4 
\end{eqnarray*}
and integration therefore tells us that 
$$\int_M (-s_-) |\Phi|^2~ d\mu_g \geq \int_M  |\Phi|^4~ d\mu_g .$$
Applying the Cauchy-Schwarz inequality, we thus have
$$\left(\int_M s_-^2~ d\mu_g
\right)^{1/2} \left(\int_M |\Phi|^4~ d\mu_g
\right)^{1/2} \geq \int_M |\Phi|^4~ d\mu_g$$
and squaring therefore gives us 
$$\int_M  s_-^2~ d\mu_g \geq \int_M |\Phi|^4~ d\mu_g= 8  \int_M |F_\theta^+|^2~ d\mu_g$$
where the last equality is an algebraic consequence of  \eqref{sd}. Because $iF^+_\theta$ differs
from the harmonic representative of $2\pi  (\mathbf{a}^+)_g$ by the self-dual part of an exact form,
we also have $$\int_M |F_\theta^+|^2~ d\mu_g\geq 4\pi^2 [(\mathbf{a}^+)_g]^2,$$
and putting these two inequalities together then shows that \eqref{virtue} is therefore satisfied for
the given metric $g$.  Since this argument applies equally well to   every other metric, it therefore follows that 
$\mathbf{a}$ satisfies the definition of  a mock-monopole class.  

While the above argument is nearly standard in Seiberg-Witten theory, 
typical uses of it \cite{K,lno}  never even mention the function $s_- = \min (s_g, 0)$,
and instead just  use it to   deduce    lower bounds
for the important Riemannian functional 
$$\int s_g^2 \, d\mu_g \geq  \int s_-^2 \, d\mu_g .$$
However, given the crucial role played by $s_-$ in the proof of Proposition \ref{thumper}, 
our objectives require us to emphasize this subtle refinement of the conclusion. 

With these ideas established, we now come to Kronheimer's construction. Let $N$ be  a compact connected oriented 
prime $3$-manifold 
with $b_1\geq 2$ that  is equipped with a {\em taut foliation}. Prime means that $N$ cannot be expressed as
a non-trivial connected sum, and the assumption that $b_1(N) \geq 2$ implies, in particular, that $N$ is not $S^2\times S^1$. A taut foliation amounts  to   a Frobenius-integrable distribution 
 $D\subset TN$ of oriented $2$-planes for which there exists a smooth  closed curve 
$S^1\subset Y$ that  meets every integral surface of $D$ transversely. 
We now set $X= N\times S^1$, and  define an orientation-compatible almost-complex structure $J^\prime$ on $X$ 
 that, with respect to some arbitrary Riemannian metric $h$ on $N$, sends $D^\perp\subset TN$ to $TS^1$, 
while acting on $D$ by $+90^\circ$ rotation. Deforming $J^\prime$ to be integrable near certain points and then 
blowing up then equips $M:= X \# k \overline{\CP}_2$ 
with an almost-complex structure $J$ in a specified  manner. While there are
 many choices involved in this construction, the homotopy class of the resulting  $J$  is independent
of these choices, so this construction  equips $M$ with a preferred spin$^c$ structure.  With one modest improvement, 
and using  the terminology we have just introduced, 
the proof of  \cite[Proposition 8]{K} then actually proves the following:

\begin{prop}[Kronheimer] 
\label{kron} 
Let $N$ be a compact oriented  connected prime  $3$-manifold  with $b_1(N) \geq 2$
that carries an  integrable oriented distribution  $D\subset TN$  of  $2$-planes that is  tangent to a taut foliation. 
Set   $X= N\times S^1$,   and  equip
$M= X\# k \overline{\CP}_2$ with the almost-complex structure $J$ described  above. 
Then 
$\mathbf{a} := c_1^\RR (M, J) \in H^2(M, \ZZ)/\mbox{torsion}$ is a mock-monopole class. 
\end{prop}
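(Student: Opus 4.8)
The goal is to establish that, under the stated hypotheses, the canonical class $\mathbf{a} = c_1^\RR(M,J)$ of the almost-complex structure $J$ constructed on $M = (N\times S^1)\# k\overline{\CP}_2$ is a mock-monopole class, i.e.\ that the estimate \eqref{virtue} holds for every Riemannian metric $g$ on $M$. Since we have already verified that every monopole class is a mock-monopole class, the natural route is to follow Kronheimer's argument \cite[Proposition 8]{K} and extract from it precisely the conclusion that the Seiberg-Witten scalar-curvature estimate holds for all $g$ — being careful, per the discussion just preceding the statement, to phrase the conclusion in terms of $s_-=\min(s_g,0)$ rather than of $s_g$ itself, which is the ``one modest improvement'' alluded to in the text. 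The first step is to recall Kronheimer's setup: a taut foliation on the prime $3$-manifold $N$ with $b_1(N)\ge 2$ has, by Gabai's and Eliashberg-Thurston's work, a symplectic/contact structure on $N\times S^1$ (more precisely, the taut foliation is deformed to a contact structure and $N\times S^1$ acquires a near-symplectic or symplectic structure in the relevant sense), so that after the deformation near finitely many points and the $k$ blow-ups one obtains an almost-complex $M$ whose Seiberg-Witten invariant in the spin$^c$ structure $\mathfrak{c}$ determined by $J$ is computed via Taubes-type / gluing arguments.

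**Key steps, in order.** (1) First I would set up the diffeomorphism invariance: the homotopy class of $J$, hence the spin$^c$ structure $\mathfrak{c}$ and the class $\mathbf{a}=c_1^\RR(L_\mathfrak{c})$, is independent of all choices in the construction, so it suffices to prove \eqref{virtue} for this one spin$^c$ structure. (2) Next I would invoke the core of Kronheimer's argument: for the spin$^c$ structure $\mathfrak{c}$ the Seiberg-Witten equations \eqref{drc}--\eqref{sd} admit a solution $(\Phi_g,\theta_g)$ for \emph{every} metric $g$. This is the substantive input and is where taut foliation / contact geometry enters — one uses that $N\times S^1$ carries a suitable symplectic-type structure whose canonical spin$^c$ structure has nonvanishing Seiberg-Witten count (in a wall-crossing or ``$SW\neq 0$ for all metrics'' sense because $b_+(M)=2k$ with, in fact, $b_+\ge 2$ ensuring no chamber issues — here $b_+(M) = b_+(N\times S^1) = b_1(N) \ge 2$, which is exactly why the hypothesis $b_1(N)\ge 2$ and $k\ge 0$ is imposed), and blow-up formulas for SW invariants carry this over to $M = X\# k\overline{\CP}_2$. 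In other words, $\mathbf{a}$ is literally a monopole class in the sense of the definition above. (3) Given that, I would simply quote the computation already carried out in the excerpt — apply the Weitzenböck formula \eqref{wnbk} to $(\Phi_g,\theta_g)$, integrate, use Cauchy-Schwarz and the algebraic identity $|\Phi|^4 = 8|F_\theta^+|^2$ from \eqref{sd}, and the bound $\int_M|F_\theta^+|^2\,d\mu_g \ge 4\pi^2[(\mathbf{a}^+)_g]^2$ coming from the fact that $iF_\theta^+$ differs from the harmonic representative of $2\pi(\mathbf{a}^+)_g$ by an exact self-dual form — to deduce \eqref{virtue}. Since this holds for every $g$, $\mathbf{a}$ is a mock-monopole class. (4) Finally I would note that $\mathbf{a}$ is characteristic and integral because it is $c_1$ of a spin$^c$ structure, so it meets the definition on the nose.

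**The main obstacle.** The genuinely hard part is step (2): establishing that the Seiberg-Witten equations have a solution for \emph{every} metric on $M$, not just a generic one. This is exactly the point where one must be faithful to what Kronheimer's paper actually proves rather than to the folklore about it — as the excerpt warns, \cite{K} does not assert the existence of a monopole class in full generality, but rather runs a more indirect argument. Concretely, one needs: (a) a symplectic (or taut-foliation-induced, via Eliashberg-Thurston) structure on $N\times S^1$ for which the canonical spin$^c$ structure has $SW\neq 0$ — this rests on the confoliation theory and on Taubes' ``$SW\Rightarrow Gr$''-type results adapted to this non-Kähler setting, together with the fact that $b_+\ge 2$ so that the invariant is metric-independent; and (b) the blow-up formula $SW_{X\# k\overline{\CP}_2}(\mathbf{a}\oplus\sum\pm\mathsf{E}_i) = \pm SW_X(\mathbf{a})$ to transport nonvanishing from $X=N\times S^1$ to $M$. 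Assembling these — and in particular checking that the improvement to an $s_-$-estimate survives verbatim, which it does since the sign manipulation $s|\Phi|^2 \ge s_-|\Phi|^2$ in the Weitzenböck step is pointwise and metric-agnostic — is the crux; everything after it is the routine Weitzenböck/Cauchy-Schwarz bookkeeping already displayed in the excerpt.
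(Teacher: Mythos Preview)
Your step~(2) is where the argument breaks down, and the paper explicitly warns against precisely this route. You propose to show that $\mathbf{a}$ is a genuine monopole class by arguing that $X=N\times S^1$ is symplectic, invoking Taubes to obtain $SW_X\neq 0$, and then applying the blow-up formula. But $N\times S^1$ need not be symplectic: a taut foliation gives a closed $2$-form $\omega$ on $N$ positive on leaves, yet to obtain a nondegenerate closed $2$-form on $N\times S^1$ one would also need a closed $1$-form on $N$ nonvanishing on $\ker\omega$, which is not guaranteed. More decisively, the paper cites Biquard's theorem that $S^1\times N$ carries \emph{no} basic classes when $N$ is a nontrivial circle bundle over a hyperbolic Riemann surface --- and these are exactly the $N$ needed for Lemma~\ref{crux}. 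Since such $N$ do carry taut foliations (via the Milnor flat connection) and have $b_1(N)=b_1(\Sigma)\geq 4$, Taubes' theorem together with Biquard's result actually shows that $N\times S^1$ is \emph{not} symplectic in these cases. The shortcut ``$SW\neq 0 \Rightarrow$ monopole class $\Rightarrow$ mock-monopole class'' is therefore unavailable. (Incidentally, $b_+(M)=b_1(N)$, not $2k$: the $\overline{\CP}_2$ summands contribute only to $b_-$.)

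The paper's argument is genuinely indirect and never produces a monopole class on $M$. One passes to the $\ell$-fold cyclic cover $\overleftrightarrow{M}_\ell$ of $M$, realized by gluing the ends of $M_\ell = (N\times[0,\ell])\#(k\ell)\,\overline{\CP}_2$. Eliashberg--Thurston lets one deform the taut foliation to contact structures of either sign, so $M_\ell$ sits as the central region of an asymptotically conical almost-K\"ahler manifold with symplectically convex ends that do not depend on $\ell$. Seiberg--Witten solutions exist on this open manifold; cutting off and transplanting to $(\overleftrightarrow{M}_\ell, g_\ell)$, where $g_\ell$ is the pullback of the given metric $g$ on $M$, yields solutions of a \emph{perturbed} Seiberg--Witten system, with perturbation supported in a collar of fixed width and bounded independently of $\ell$. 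The Weitzenb\"ock estimate then gives
\[
C + \Big(\int_{\overleftrightarrow{M}_\ell} s_-^2\,d\mu_{g_\ell}\Big)^{1/2} \;\geq\; \big(32\pi^2\,\ell\,[\mathbf{a}^+]^2\big)^{1/2}
\]
with $C$ independent of $\ell$; dividing by $\sqrt{\ell}$ and letting $\ell\to\infty$ annihilates the error term and yields \eqref{virtue} on $(M,g)$. This stretching-and-limiting mechanism is the essential idea your proposal is missing.
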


Let $X_\ell= N\times[0,\ell]$,  set $M_ \ell := X_\ell \# (k\ell) \overline{\CP}_2$, and let $\overleftrightarrow{M}_\ell$ be obtained
from $M_\ell$ by identifying $N \times\{ 0\}$ with $N \times\{ \ell\}$. Kronheimer begins by observing that 
$\overleftrightarrow{M}_\ell$ can be thought of as an $\ell$-fold cover of $M$. Given a metric $g$ on $M$,
we can  therefore pull it back to $\overleftrightarrow{M}_\ell$ and $M_\ell$ as a metric ${g}_\ell$, and then consider 
the Seiberg-Witten equations (\ref{drc}--\ref{sd}) 
with respect to 
${g}_\ell$  and the pulled-back spin$^c$ structure. Because the tautness hypothesis implies
 that $D$ can deformed into a
contact structure of either sign, Kronheimer shows that $M_\ell$ admits a symplectic form 
that is convex at both ends, allowing him to view it as the central region of 
 an
 asymptotically conical almost-K\"ahler manifold obtained by adding conical ends that do
not depend on $\ell$. This allows him to predict the existence of  solutions of the Seiberg-Witten equations on ${M}_\ell$ 
that are uniformly controlled in the regions near the boundary. By cutting  these off and  pasting,
he  then obtains pairs $(\Phi , \theta )$ on $(\overleftrightarrow{M}_\ell,g_\ell)$  that satisfy    some perturbation  
\begin{eqnarray*}
\dir_\theta \Phi &=& \Omega\\
F_\theta^+ &=&-\frac{1}{2}\Phi \odot \bar{\Phi} + i \eta
\end{eqnarray*}
of the Seiberg-Witten equations, 
where $\Omega\in \Gamma (\mathbb{V}_-)$ and   $\eta\in \Gamma (\Lambda^+)$ are supported
 in $(N\times [0, \epsilon]) \cup (N\times [\ell -  \epsilon, \ell])$ and  satisfy   uniform point-wise  bounds that are independent of $\ell$. 
 By integrating the Weitzenb\"ock formula 
$$ 4|\! \dir_\theta\Phi |^2 =	4|\nabla_{\theta}\Phi|^2 +s|\Phi|^2 + |\Phi|^4 + 4 \langle \eta , i\Phi \odot \bar{\Phi} \rangle +  \mbox{divergence terms}$$
on $\overleftrightarrow{M}_\ell$, this yields 
$$\int_{\overleftrightarrow{M}_\ell} \left[4 |\Omega|^2 + ( 2\sqrt{2}|\eta| -s_-) |\Phi |^2\right]  d\mu_{g_\ell}\geq \int_{\overleftrightarrow{M}_\ell}  |\Phi|^4 d\mu_{g_\ell}$$
and the Cauchy-Schwarz and  triangle inequalities therefore yield 
$$C+ \left( \int_{\overleftrightarrow{M}_\ell} s_-^2 \, d\mu_{g_\ell} \right)^{1/2}  \geq  \left(8 \int_{\overleftrightarrow{M}_\ell} |\mathcal{F}^+|^2 \, d\mu_{g_\ell} \right)^{1/2}$$
where $C$ is a constant
independent of $\ell$, and where $\mathcal{F}^+$ is the pull-back to $\overleftrightarrow{M}_\ell$ of the self-dual part of the 
harmonic representative of $2\pi c_1^\RR (M, J)= 2\pi \mathbf{a}$ 
with respect to $g$. Back down on $(M,g)$, however, this is equivalent to the statement that 
$$ \frac{C}{\sqrt{\ell}}+  \left( \int_{M} s_-^2 \, d\mu_{g} \right)^{1/2}  \geq  \left( 32\pi^2 [\mathbf{a}^+]^2 \right)^{1/2}$$
and since $C$ is independent of $\ell$, we therefore obtain the desired inequality \eqref{virtue} by taking the limit as  $\ell \to \infty$. 
Since this works for any metric $g$ on $M$, it follows that $\mathbf{a}=c_1^\RR(M,J)$ is therefore  a mock-monopole class, as claimed. 

\bigskip 

This provides 
a Seiberg-Witten proof of  Lemma \ref{crux}. Indeed,  notice that, by passing to covers, it suffices to consider 
the case when $N\to \Sigma$ is  a circle bundle of Euler class $+1$ over  a hyperbolic surface. Since
a construction due to  Milnor \cite{milnor-flat} endows $N\to \Sigma$ with a flat $\mathbf{SL}(2, \RR)$-connection, 
$N$ therefore admits a taut foliation; and since   
  $b_1 (N) = b_1(\Sigma ) > 2$, Proposition \ref{kron} therefore applies. 
Thus 
 $M = (N\times S^1) \# k \overline{\CP}_2$ carries a mock-monopole class for any $k\geq 0$, 
and  Corollary \ref{bumper}  therefore asserts that neither  $X=N\times S^1$ nor $M = X \# k \overline{\CP}_2$ can admit metrics of positive-scalar curvature, as claimed. Theorems \ref{ellipsis} and \ref{parity} then once again  follow by Proposition \ref{lynx}. 

\pagebreak 

\section{Pathological Features  of  Class VII} 
\label{pathology} 

%

Our formulation of  Theorems \ref{ellipsis} and \ref{parity} has intentionally excluded  surfaces
with $\kod = -\infty$ and  $b_1$ odd. These complex surfaces, which actually  all have $b_1=1$, are 
for historical reasons known   as 
{\sf surfaces of class  {VII}}. The most familiar  class-{\sf VII} surfaces  
are the (primary) Hopf surfaces $(\CC^2 - \{ 0\})/\ZZ$, which are diffeomorphic to 
$S^3\times S^1$. These are already  pathological from the standpoint of Theorem
\ref{parity}, because results of Kobayashi \cite{okob} or Schoen  \cite{sch}  imply that a primary Hopf surface has Yamabe invariant 
$\mathscr{Y} (S^3\times S^1)= \mathscr{Y} (S^4)=8\sqrt{6}\pi$, 
while theorem of Gursky-LeBrun \cite{gl1} shows that  its one-point blow-up has $\mathscr{Y}([S^1 \times S^3] \# \overline{\CP}_2) = \mathscr{Y} (\CP_2) = 12\sqrt{2}\pi$.  
 The exclusion of class {\sf VII} surfaces from Theorem \ref{parity} is therefore a matter of necessity. 
 
 However,  surfaces of class {\sf VII} must also be excluded from Theorem  \ref{ellipsis},
 because their Yamabe invariants are not always of the same sign. Indeed, while the Hopf surfaces discussed
 above  have positive Yamabe invariant,  a  class of  minimal class-{\sf VII} surfaces 
 discovered by Inoue \cite{inoue72}
  were  shown by  the first author  \cite{alba} to have Yamabe invariant zero. 
 We will call these examples  Inoue-Bombieri
surfaces,  both  because Inoue's paper credited  Bombieri with their independent discovery, 
and  to distinguish them from various other class-{\sf VII} surfaces that also bear Inoue's name. 
 As yet another    application of Theorem \ref{taurus},  we now give a simplified proof of \cite[Theorem 4.5]{alba}: 
 
\begin{thm}[Albanese]
\label{michael} 
Let $X$ be an Inoue-Bombieri  surface,  and let $M$ be obtained from 
$X$ by blowing up $k\geq 0$ points. Then $\mathscr{Y}(M)=0$. 
\end{thm}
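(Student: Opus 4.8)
The plan is to reduce Theorem \ref{michael} to Theorem \ref{taurus} by exhibiting the underlying smooth $4$-manifold of an Inoue-Bombieri surface as a mapping torus over $S^1$ whose fiber is an expansive $3$-manifold, and then observing that $\mathscr{Y}(M) \geq 0$ already follows from the general elliptic-surface machinery quoted in \S\ref{reduction}. Concretely, the two inequalities $\mathscr{Y}(M) \leq 0$ and $\mathscr{Y}(M) \geq 0$ will be established separately.

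First I would recall the structure of Inoue-Bombieri surfaces. Each such $X$ is a quotient of $\mathcal{H} \times \CC$ (where $\mathcal{H}$ is the upper half-plane) by a discrete group of affine transformations built from an element of $\mathbf{SL}(3,\ZZ)$ (or $\mathbf{GL}(3,\ZZ)$) with appropriate eigenvalue data; in all cases $b_1(X) = 1$, $b_2(X) = 0$, and $X$ is a compact quotient of a solvable Lie group. The key classical fact, due to Inoue, is that $X$ fibers smoothly (indeed holomorphically in a suitable sense, or at least as a $C^\infty$ fiber bundle) over $S^1$ with fiber a $3$-manifold $N$ that is itself a torus bundle (or, in the non-parabolic cases, a $T^2$- or Heisenberg-type nilmanifold bundle) over $S^1$ — in short, $N$ is a compact aspherical $3$-manifold with $b_1(N) \geq 1$, being an infra-solvmanifold. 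So the first step is to cite Inoue's description to conclude that the underlying smooth $4$-manifold of $X$ admits a submersion $\pip : X \to S^1$ with fiber an aspherical $3$-manifold $N$ with $b_1(N) \neq 0$; such an $N$ is expansive via the identity map (exactly as in the application of Theorem \ref{taurus} to Lemma \ref{crux}). Then with $P = k\overline{\CP}_2$ and $M = X \# k\overline{\CP}_2$, Theorem \ref{taurus} immediately gives $\mathscr{Y}(M) \leq 0$.

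For the reverse inequality $\mathscr{Y}(M) \geq 0$, the plan is to note that every Inoue-Bombieri surface, being a solvmanifold-type quotient, carries natural sequences of metrics with scalar curvature tending to zero in $L^2$ — or, more cleanly, to invoke the fact that $X$ admits a locally homogeneous (indeed flat-ish, solvmanifold) metric and then use the collapsing construction: rescaling the fiber directions of the $S^1$-tower produces metrics $g_\varepsilon$ with $\int |s_{g_\varepsilon}|^{?}\,d\mu \to 0$ appropriately normalized, so that $\mathscr{Y}(X) \geq 0$. Since blowing up cannot decrease the Yamabe invariant below zero once it is zero — more precisely, a one-point blow-up of a manifold with $\mathscr{Y} \geq 0$ still has $\mathscr{Y} \geq 0$ by the Gromov-Lawson–type gluing/surgery arguments, or by directly running the same collapsing construction on $M$ away from the exceptional divisors — we get $\mathscr{Y}(M) \geq 0$. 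Combining, $\mathscr{Y}(M) = 0$.

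The main obstacle I anticipate is the $\mathscr{Y}(M) \geq 0$ direction, specifically making the collapsing estimate rigorous on the blown-up manifold $M$ rather than just on $X$: one must produce, for every $\varepsilon > 0$, a unit-volume metric on $M = X \# k\overline{\CP}_2$ with scalar curvature $> -\varepsilon$, which requires gluing the $\overline{\CP}_2$-summands in with necks that do not destroy the smallness of the negative part of the scalar curvature. The cleanest route is probably to quote the general principle (already used implicitly in \S\ref{reduction} and attributed there to \cite[Corollary 1]{lky}) that the underlying $4$-manifold of any properly elliptic — or here, any class-\textsf{VII}-with-a-torus-bundle-structure — surface, together with its blow-ups, satisfies $\mathscr{Y} \geq 0$; alternatively one notes that Inoue-Bombieri surfaces admit compatible metrics with $\int s^2\,d\mu = 0$ is too strong, but $\int s^2 d\mu \to 0$ along the natural $T^2$-collapse suffices, and this collapse is unobstructed by a finite number of blow-ups since each blow-up modifies the geometry only in a small ball. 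I would present the $\leq 0$ half in full detail (it is the genuinely new simplification) and handle the $\geq 0$ half by citation to the already-invoked results of \cite{lky} together with the observation that the relevant sequences of metrics survive blowing up.
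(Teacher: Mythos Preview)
Your plan for $\mathscr{Y}(M)\leq 0$ is exactly the paper's: Inoue's description gives a submersion $X\to S^1$ with fiber either $\mathbb{T}^3$ or a nontrivial circle bundle over $\mathbb{T}^2$, both aspherical with $b_1\neq 0$ and hence expansive, so Theorem~\ref{taurus} applies.

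The $\mathscr{Y}(M)\geq 0$ direction, however, contains a real slip. Inoue--Bombieri surfaces are \emph{not} elliptic; they have $b_2=0$ and $\kod=-\infty$, so the ``elliptic-surface machinery'' of \S\ref{reduction} and \cite[Corollary~1]{lky} simply does not apply to them. Your instinct that collapsing along the solvmanifold fibers should work is correct, but the clean packaging is the Cheeger--Gromov theory of $F$-structures \cite{cheegro}: an Inoue--Bombieri surface carries an $F$-structure of positive rank, which immediately yields $\mathscr{Y}(X)\geq 0$. For the blow-ups, the tool you want is not Gromov--Lawson surgery (which concerns strictly positive scalar curvature and does not directly control the Yamabe invariant near zero) but Kobayashi's connect-sum inequality \cite[Theorem~2]{okob}, which gives $\mathscr{Y}(X\# k\overline{\CP}_2)\geq 0$ once $\mathscr{Y}(X)\geq 0$ and $\mathscr{Y}(\overline{\CP}_2)>0$. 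This is precisely how the paper handles it, and it disposes of the ``main obstacle'' you anticipated in one line.
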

\begin{proof} Each Inoue-Bombieri surface $X$ is the  quotient of a half-space in $\CC^2$ by a discrete group of
affine transformations that preserves the half-space. 
Each one comes  equipped  \cite{inoue72} with a smooth submersion $\pip : X\to S^1$ whose fibers are 
aspherical  $3$-manifolds $N$ with $b_1\neq 0$;  more specifically,  
 $N$ is always either  the  $3$-torus $\mathbb{T}^3$ or a 
circle bundle 
$N\to \mathbb{T}^2$ over the $2$-torus. 
Since  each such  $N$  is  expansive, Theorem  \ref{taurus} tells us   that $\mathscr{Y}(M)\leq 0$. 
On the other hand, each such $X$ admits an $F$-structure of positive rank, in the sense of 
Cheeger-Gromov \cite{cheegro}, so each such $X$ has $\mathscr{Y}(X)\geq 0$, 
and Kobayashi's connect-sum theorem \cite[Theorem 2]{okob} therefore implies that $\mathscr{Y}(M)= \mathscr{Y}(X\# k \overline{\CP}_2)\geq 0$, too. 
We therefore conclude that  $\mathscr{Y}(M) =0$, as claimed. 
\end{proof}

Because class-{\sf VII} surfaces inhabit a world  where algebraic geometry holds so little sway, it is perhaps unsurprising that 
they turn out to violate so many of our K\"ahlerian expectations. The question of what new patterns we might  discover here 
 is a matter deserving  further discussion  in  the next section. 

 \pagebreak

\section{Problems and Perspectives} 
\label{conclusion}
 
 A main theme of this article has been that Lemma \ref{crux} can actually be
 proved in many different ways, and that once this lemma is known, our  main results,  
 Theorems \ref{ellipsis} and \ref{parity}, then follow. 
 In \S \ref{schoen-yau},   we gave  a self-contained 
 explanation of  how the  Schoen-Yau method works, and  then showed why it implies our key results.
  However, a recent paper 
   of Gromov \cite{misha} gives a systematic approach to the subject that subsumes  all of the essential arguments used in 
   \S \ref{schoen-yau}.     Furthermore, an unpublished argument by Jian Wang implies   that
   positive-scalar-curvature 
 metrics cannot exist on $4$-manifolds that admit maps   of non-zero degree to  aspherical $4$-manifolds with $b_1\neq 0$,
  and  
 a recent preprint of  Chodosh, Li, and Liokumovich \cite{chadosh} improves this by dropping the 
 $b_1 \neq  0$ hypothesis. 
Either result suffices to prove  Lemma  \ref{crux} and Theorem \ref{michael}.

While the Seiberg-Witten proof of Lemma \ref{crux} given in \S \ref{monopoly}  emphasizes the degree to which 
 mock-monopole classes  are   quite  sufficient 
for  our purposes,  it is entirely possible that Proposition \ref{kron}  might actually reflect the existence of a true monopole class 
on some high-degree cover of the manifold. Any result in this direction would certainly shed completely new  light on  the  subject.

 Many intriguing  questions  about the Yamabe invariant remain open for complex  surfaces of  $\kod = -\infty$.
 For example, while the Yamabe invariant is always  positive when such a complex surface has  $b_1$  even, 
 we still do not know whether blowing up  a rational or ruled surface  can ever  change the precise value of its Yamabe invariant; 
 all we know for sure is that the invariant must remain confined to a relatively narrow numerical range. For complex surfaces
of class {\sf VII}, the situation is even more daunting; because a full classification of these manifolds is lacking,
we still  cannot be absolutely certain that a  class-{\sf VII} surface can never have negative Yamabe invariant! 
Still, this seems rather unlikely. The global spherical shell conjecture \cite{dloutele} would imply that any class-{\sf VII} surface
is  diffeomorphic to a blow-up of  either a Hopf surfaces or an  Inoue-Bombieri surface, and it is moreover  
definitively  known \cite{teleman} that this assertion does  at least hold  for surfaces  with small  $b_2$. 
If the global spherical shell conjecture were true,
Theorem \ref{michael} would thus immediately imply that  any complex surface of 
$\kod = -\infty$ must necessarily have  non-negative Yamabe invariant.

\vfill

\noindent 
{\sf Acknowledgments.} The first author would like to thank   Daniele Angella and Vestislav Apostolov 
for pointing out a number of useful references, while 
the second author would like to thank  Ian Agol, Misha Gromov,  and Peter Kronheimer for
their  extremely  helpful comments and suggestions.

\pagebreak 
%

\vfill
\noindent
{\sc CIRGET,
Universit\'e du Qu\'ebec \`a Montr\'eal, 
Case postale 8888, Succursale centre-ville,
Montr\'eal (Qu\'ebec) H3C 3P8, Canada}

\bigskip 

\noindent
{\sc Department of Mathematics, Stony Brook University (SUNY), Stony Brook, NY 11794-3651, USA}

\end{document}